\documentclass{amsart}
\usepackage{hyperref,enumerate}

\usepackage{soul,xcolor} 

\newtheorem{theorem}{Theorem}[section]
\newtheorem{lemma}[theorem]{Lemma}

\newtheorem{proposition}[theorem]{Proposition}
\newtheorem{corollary}[theorem]{Corollary}

\theoremstyle{definition}
\theoremstyle{acknowledgement} 
\newtheorem{definition}[theorem]{Definition}

\newtheorem{remark}[theorem]{Remark}
\numberwithin{equation}{section}

\title[Renormalized Area]{Renormalized area of minimal surfaces \\ in hyperbolic space}

\author{Matthew J. Gursky}
\address{Department of Mathematics \\
         University of Notre Dame\\
         Notre Dame, IN 46556}

\author{Khoi Nguyen}
\address{Department of Mathematics \\
         University of Notre Dame\\
         Notre Dame, IN 46556}

\author{Aaron J. Tyrrell}
\address{Department of Mathematics \\
         University of Notre Dame\\
         Notre Dame, IN 46556}

\begin{document}

\subjclass[2020]{Primary 53A10; Secondary 53C42, 58E12, 58J05.}

\keywords{Minimal surfaces, hyperbolic space, renormalized area, higher codimension, gap theorem}
\maketitle

\begin{abstract}  In this paper we consider minimal surfaces in hyperbolic space of arbitrary codimension that are critical for renormalized area.   We give two criteria which imply that $Y$ must be a totally geodesic disk.  One can be viewed as a 'gap' result for the renormalized area. 
\end{abstract}

\section{Introduction} 

In this article we consider two-dimensional minimal surfaces in hyperbolic space which are critical, in a sense described below, for the renormalized area.  To describe our setting in more detail, let $\mathbb{H}^{n+1}$ denote the upper half-space model of hyperbolic space: 
\begin{align*}
\mathbb{H}^{n+1} = \{ (x^1, \dots, x^n, r) \in \mathbb{R}^{n+1}_{+} \, : \, r > 0 \}, 
\end{align*}
where the hyperbolic metric is given by 
\begin{align} 
g_H = \dfrac{ (dx^1)^2 + \cdots + (dx^n)^2 + dr^2}{r^2}. 
\end{align}
Let $Y^2 \subset \mathbb{H}^{n+1}$ be the interior of a connected, compact, properly embedded minimal surface $\bar{Y} \subset \mathbb{H}^{n+1}\cup \{r=0 \}= \bar{\mathbb{H}}^{n+1}$, with $\gamma = \partial Y \subset \mathbb{R}^{n} = \{r=0\}$ an embedded closed curve. We will be interested in the case of codimension greater than one; i.e., $n \geq 3$.  

Boundary regularity theory for minimal surfaces of codimension one in hyperbolic space is highly developed.  For higher codimension much less is known; but see, for example, \cite{FLin}, \cite{marx2021variations}, \cite{jiang2026asymptotics}.   Since our interest is in the global variational theory of the renormalized area, we will assume enough regularity of the boundary curve and the minimal surface in order to justify our calculations. In particular, throughout the article we will assume $\gamma$ is $C^{4, \alpha}$, and $\bar{Y}$ is smooth in its interior and $C^{4,\alpha}$ up to the boundary.  

The renormalized area can be defined in a much more general setting than ours, and moreover for surfaces which are not necessarily minimal \cite{graham1999conformal}, \cite{AM}.  In our setting, however, the definition is more straightforward: given $\epsilon > 0$, let 
\begin{align*}
Y_{\epsilon} = \{ (x^1,\dots,x^n,r) \in Y \, : \, r > \epsilon \}. 
\end{align*}
The area of $Y_{\epsilon}$ has an expansion of the form 
\begin{align} \label{Ap}
\mbox{Area}(Y_{\epsilon}) = c_1 \epsilon^{-1} + c_0 + o(1), \ \ \ \epsilon \rightarrow 0.
\end{align}
The renormalized area $\mathcal{A}(Y)$ is the finite part of this expansion, i.e., $\mathcal{A}(Y) = c_0$.  Alexakis-Mazzeo \cite{AM} gave a closed formula for $\mathcal{A}(Y)$ (again in a more general setting than we consider here): 
\begin{align} \label{AYAM}
\mathcal{A}(Y) = - 2 \pi \chi(Y) - \frac{1}{2}\int_Y |B|^2 \, dA_h, 
\end{align} 
where $B$ is the trace-free second fundamental form of $Y$ and $dA_h$ the area form with respect to the induced metric $h = g_H \vert_Y$.  Since $Y$ is minimal, $B$ is just the second fundamental form.  Moreover, by conformal invariance the integral is convergent: 
\begin{align} \label{BB}
\int_Y |B|^2 \, dA_h = \int_Y |\bar{B}|^2 \, dA_{\bar{h}}, 
\end{align}
where $\bar{B}$ is the trace-free second fundamental form of $Y$ as a submanifold of $\mathbb{R}^{n+1}_{+}$ with the Euclidean metric $g_0$, and $\bar{h} = g_0 \vert_Y$.

From (\ref{AYAM}) we have the (non-obvious) inequality 
\begin{align} \label{Abelow}
\mathcal{A}(Y) \leq - 2 \pi \chi(Y), 
\end{align}
with equality if and only if $Y$ is totally geodesic.  By a subsequent result of Bernstein \cite{bernstein2022sharp}, the renormalized area satisfies   
\begin{align} \label{BernI}
\mathcal{A}(Y) \leq - 2 \pi,
\end{align}
and again equality holds if and only if $Y$ is totally geodesic. 

We are interested in surfaces which are critical for the renormalized area.  In Section \ref{Sec2} we give two possible interpretations of 'criticality.'  The first is that $Y$ is critical over variations of properly embedded surfaces $\{ Y_t \}_{-T < t < T}$ such that $Y_0 = Y$ and each $Y_t$ meets the boundary orthogonally; i.e., over surfaces which are {\em asymptotically minimal}.  Asymptotically minimal surfaces satisfy $|H|_{g_H} = O(r)$ as $r \to 0$, where $H$ is the mean curvature vector\footnote{Our convention will be that the mean curvature is the trace of the second fundamental form.}.  Alexakis-Mazzeo \cite[Proposition 3.1]{AM} showed that renormalized area is well defined for asymptotically minimal surfaces.   The second, and more restrictive meaning of criticality is that $Y$ is critical over variations by minimal surfaces $\{ Y_t \}_{-T < t < T}$ with $Y_0 = Y$.  

In the codimension-one case, Alexakis-Mazzeo showed that if $\gamma = \partial Y$ is connected and $Y$ satisfies a nondegeneracy condition, then $\gamma = \partial Y$ is a round circle and $Y$ is a totally geodesic disk (see Theorem 7.1 of \cite{AM}).   We remark that they proved this only assuming $\gamma$ is $C^{3,\alpha}$, since the regularity theory for minimal surfaces already gives that $Y$ is smooth in the interior and $C^{3,\alpha}$ up to the boundary.  

Our goal in this paper is to characterize critical minimal surfaces of higher codimension under various assumptions on the second fundamental form $A$ of $Y$.   Our first result assumes a pointwise bound on $A$:

\begin{theorem}  \label{Thm1}  Let $Y^2 \subset \mathbb{H}^{n+1}$ be a connected, properly embedded minimal surface satisfying the following regularity assumptions: \smallskip 

\begin{itemize} 

\item   $Y$ is smooth in its interior and $C^{4,\alpha}$ up to the boundary. \smallskip 

\item $\partial Y = \gamma \subset \mathbb{R}^n$ is an embedded, closed, $C^{4,\alpha}$-curve. \smallskip 

\end{itemize} 

\noindent Assume one of the following holds: \medskip 

\noindent $(i)$ $Y$ is critical for the renormalized area among asymptotically minimal surfaces (see Section \ref{Sec2} for definitions). \medskip 

\noindent $(ii)$ $Y$ is critical for the renormalized area among minimal surfaces and is nondegenerate (see Section \ref{Sec22} for definitions). \medskip 

\noindent If 
\begin{align} \label{Asmall} 
|A|^2 < 2
\end{align}
holds at each point of $Y$, then $\gamma = \partial Y$ is a circle and $Y$ is a totally geodesic disk.  
\end{theorem}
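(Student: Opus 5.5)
The plan is in three steps. First, criticality gives the vanishing of the third-order coefficient $u_3$ in the boundary expansion of $Y$. Second, the hypothesis $|A|^2<2$ gives a maximum principle for the Jacobi operator. Third, Jacobi fields coming from isometries of $\mathbb{H}^{n+1}$, combined with $u_3=0$, force $Y$ to have enough symmetry to be a totally geodesic disk.

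\emph{Step 1: criticality forces $u_3=0$.} Near $\gamma$, write $Y$ as a normal graph over the vertical cylinder $\gamma\times[0,r_0)$, with graph function $u(s,r)=u_2(s)r^2+u_3(s)r^3+O(r^{4})$ taking values in the normal bundle of $\gamma$. Here $u_2$ is determined by the curvature of $\gamma$, while $u_3$ is the undetermined (Neumann-type) coefficient. Following Alexakis--Mazzeo, I would compute the first variation of $\mathcal{A}$ from (\ref{AYAM}), or directly from the finite part of (\ref{Ap}), along a family $Y_t$ with boundary variation field $\psi$ normal to $\gamma$. I expect a formula $\frac{d}{dt}\mathcal{A}(Y_t)\big|_{t=0}=c\int_\gamma\langle u_3,\psi\rangle\,ds$ with a universal constant $c\neq0$; the $C^{4,\alpha}$ regularity is what justifies the expansion to this order.
\begin{itemize}
\item In case $(i)$, $\psi$ is arbitrary, so $u_3\equiv0$.
\item In case $(ii)$, nondegeneracy means every normal field $\psi$ on $\gamma$ is realized by a family of minimal surfaces, via the implicit function theorem for the Dirichlet problem. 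Hence again $u_3\equiv0$.
\end{itemize}

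\emph{Step 2: a maximum principle for Jacobi fields.} For a minimal surface in $\mathbb{H}^{n+1}$, the Jacobi operator on normal sections is $LJ=\Delta^\perp J+\mathcal{B}(J)-2J$, where $\langle\mathcal{B}(J),J\rangle=\sum_{i,j}\langle A_{ij},J\rangle^2\le|A|^2|J|^2$. Then for a Jacobi field $J$,
\begin{align*}
\tfrac12\Delta|J|^2=|\nabla^\perp J|^2+(2|J|^2-\langle\mathcal{B}(J),J\rangle)\ge|\nabla^\perp J|^2+(2-|A|^2)|J|^2 .
\end{align*}
By (\ref{Asmall}) the right-hand side is nonnegative, so $|J|^2$ is subharmonic. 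Consequently any Jacobi field whose hyperbolic norm tends to $0$ at $\gamma$ vanishes identically.

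\emph{Step 3: symmetry from Killing fields.} Every Killing field $X$ of $g_H$, coming from a conformal field on $\mathbb{R}^n$, gives a Jacobi field $J_X=X^\perp$. Computing $|J_X|_{g_H}$ near $\gamma$ with the expansion from Step 1, $|J_X|_{g_H}$ has a leading term $r^{-1}\,|(X|_\gamma)^{\perp_\gamma}|$, then a term governed by $u_2$, and then a term involving $u_3$.
\begin{itemize}
\item If $X$ restricted to $\gamma$ is tangent to $\gamma$, the leading singular part disappears.
\item Using the structure of the $u_2$ term together with $u_3=0$, one should get $|J_X|_{g_H}\to0$.
\item Step 2 then gives $J_X\equiv0$, so $Y$ is invariant under the one-parameter group generated by $X$.
\end{itemize}
The Killing field I would use first is the one whose boundary trace is tangent to $\gamma$ at a chosen point to second order. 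The aim is to show that $\gamma$ is an orbit of a one-parameter conformal group, forcing $\gamma$ to be a round circle. Once $\gamma$ is a circle, uniqueness of minimal surfaces with that boundary under the same maximum principle (the difference of $Y$ and the totally geodesic disk is controlled as a Jacobi-type field) identifies $Y$ with the totally geodesic disk.

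The main obstacle is Step 3. A general Killing field is not tangent to a non-circular $\gamma$, so it is not obvious how to produce a Jacobi field that both decays at $\gamma$ and certifies roundness. If that route stalls, I would instead apply the Step 2 inequality to $|A|^2$ itself, using the Simons-type identity and the improved decay $|A|^2=O(r^{3})$ that $u_3=0$ should give. The goal would be an integral identity showing $\int_Y|A|^2\,dA_h=0$. This fallback is riskier, since the Simons inequality in $\mathbb{H}^{n+1}$ has the unfavourable term $-3|A|^4$, and it is not clear that $|A|^2<2$ alone controls it.
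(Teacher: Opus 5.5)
\textbf{There is a genuine gap in Step 3.} Steps 1 and 2 are sound. Step 1 is exactly how the paper gets $U_3=0$: Proposition \ref{1vProp} gives $c=-3$, and case $(ii)$ is Proposition \ref{prop:nondegenerate-critical-implies-U3}. The Bochner computation in Step 2 is also correct, and it even shows that $|A|^2\le 2$ makes the nondegeneracy hypothesis in $(ii)$ automatic.

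The gap in Step 3 is structural. For a Killing field $X$ with boundary trace $X_0$, one has $|X^\perp|_{g_H}=r^{-1}|X_0^{\perp_\gamma}|_{g_0}+O(1)$ near $\gamma$. So $J_X$ decays only if $X_0$ is tangent to $\gamma$ at \emph{every} point of $\gamma$, i.e.\ only if $\gamma$ is already invariant under a one-parameter conformal group. That is precisely the kind of symmetry you are trying to prove. Tangency at a single point, to any order, leaves the $r^{-1}$ blow-up elsewhere on $\gamma$, so Step 2 never applies.

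Conversely, when $X_0$ is tangent to $\gamma$, the flow of $X$ moves $Y$ through minimal surfaces with the same boundary. Then $J_X$ has zero Dirichlet data and $|J_X|_{g_H}=O(r^2)$ whether or not $u_3=0$. So nothing in Steps 2--3 actually uses criticality. Any conclusion they yielded would also hold for minimal surfaces spanning small non-round perturbations of a circle, which satisfy $|A|^2<2$ but are not totally geodesic. No argument of this shape can prove the theorem.

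Two further problems:
\begin{itemize}
\item Even a genuine symmetry would not force roundness when $n\ge 4$: $(\cos t,\sin t,\cos 2t,\sin 2t)$ is a closed orbit of a one-parameter subgroup of $SO(4)$ and is not a circle.
\item Your last step needs repair. The difference of two minimal surfaces is not a Jacobi field; once $\gamma$ is a circle, the convex hull property does the job.
\end{itemize}
The fallback also fails. In $\frac12\Delta|B|^2\ge|\nabla B|^2-2|B|^2-\frac32|B|^4$ the linear term already has the wrong sign, so $|A|^2$ is not subharmonic however small it is.

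\textbf{The missing idea} is a scalar quantity that satisfies a maximum principle in the interior \emph{and} whose boundary normal derivative sees $U_3$. The paper takes $\eta=\frac14|\bar H|^2$, built from the Euclidean mean curvature vector. It equals $u^2-|\nabla u|^2$ for $u=1/r$, where $\Delta_h u=2u$. Bochner and the Gauss equation give $\Delta\eta=-2|\mathring{\nabla}^2u|^2+|B|^2|\nabla u|^2$. Since $|\nabla\eta|^2=2|\mathring{\nabla}^2u|^2|\nabla u|^2$, off the critical set of $r$ this becomes $\Delta\eta\ge\langle X,\nabla\eta\rangle$ with $X$ bounded.

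Near $\gamma$ the $O(r)$ coefficient of $\eta$ is a multiple of $\langle\gamma'',U_3\rangle$. So $U_3=0$ gives $\partial_r\eta=0$ on $\gamma$, and the Hopf lemma excludes a boundary maximum unless $Y$ is totally geodesic. An interior maximum must then occur at a critical point $p$ of $r$ with $\det\nabla^2_{\bar h}r(p)\le 0$.

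Only here is $|A|^2<2$ used. At $p$ one has $\mathring{\nabla}^2_{\bar h}r=\langle\bar B,\nabla^\perp_{g_0}r\rangle$, $|\bar B|^2<2r^{-2}$ and $r\Delta_{\bar h}r=-2$. Together these give $|\mathring{\nabla}^2_{\bar h}r|^2<r^{-1}(-\Delta_{\bar h}r)$ at $p$, which is equivalent to $\det\nabla^2_{\bar h}r(p)>0$, a contradiction.
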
 

\medskip 

Using an $\epsilon$-regularity argument, in Section \ref{SecER} we show that (\ref{Asmall}) follows from an $L^2$-smallness assumption on $A$.  Using the renormalized area formula of \cite{AM}, we can formulate this as a 'gap' result for the renormalized area:   

\medskip 

\begin{theorem}  \label{Thm2}  Let $Y^2 \subset \mathbb{H}^{n+1}$ be a connected, properly embedded minimal surface satisfying the regularity assumptions of Theorem \ref{Thm1}. There is a universal $\epsilon_0 > 0$ such that if 
\begin{align} \label{gap}
\mathcal{A}(Y) > - 2 \pi \chi(Y) - \epsilon_0
\end{align}
holds, and either \medskip 

\noindent $(i)$ $Y$ is critical for the renormalized area among asymptotically minimal surfaces; or \medskip 

\noindent $(ii)$ $Y$ is critical for the renormalized area among minimal surfaces and is nondegenerate, \medskip 

\noindent then $\gamma = \partial Y$ is a circle and $Y$ is a totally geodesic disk.  
\end{theorem}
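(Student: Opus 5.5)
The plan is to reduce Theorem \ref{Thm2} to Theorem \ref{Thm1}. By the Alexakis--Mazzeo formula (\ref{AYAM}), the gap hypothesis (\ref{gap}) is equivalent to
\begin{align*}
\int_Y |A|^2 \, dA_h < 2\epsilon_0 .
\end{align*}
Since $Y$ is minimal, $B = A$. So it suffices to show that a sufficiently small total curvature forces the pointwise bound $|A|^2 < 2$ everywhere on $Y$. Once we have (\ref{Asmall}), either criticality hypothesis (i) or (ii) lets us apply Theorem \ref{Thm1} directly, and we conclude that $\gamma$ is a circle and $Y$ is a totally geodesic disk.

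The pointwise bound will come from an $\epsilon$-regularity estimate for minimal surfaces in $\mathbb{H}^{n+1}$, uniform in the codimension and scale-free. The key ingredients are the Gauss equation and Simons' identity for minimal submanifolds of a space form of curvature $-1$. For a minimal surface of arbitrary codimension these give the Simons-type inequality
\begin{align*}
\Delta |A|^2 \geq 2|\nabla A|^2 - c\,|A|^4 - 4|A|^2
\end{align*}
(any fixed constant $c$ suffices), together with Kato's inequality. Set $u = |A|$. Then, weakly, $\Delta u \geq -c\,u^3 - 2u$.

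Next we work on a hyperbolic geodesic ball $B_1(p)$ of unit radius centered at an arbitrary point $p \in Y$. Homogeneity of $\mathbb{H}^{n+1}$ makes all constants independent of $p$. On such a ball we need three facts about $Y \cap B_1(p)$:
\begin{itemize}
\item[(a)] it satisfies a uniform area bound, by the monotonicity formula for minimal surfaces in $\mathbb{H}^{n+1}$ combined with a density bound;
\item[(b)] it supports a Michael--Simon Sobolev inequality, which holds in Cartan--Hadamard manifolds;
\item[(c)] its intrinsic curvature is $K = -1 - \tfrac12|A|^2$, so the Gauss equation keeps the geometry controlled.
\end{itemize}
With these we run a Moser iteration, or the Choi--Schoen argument, on $\Delta u \geq -(c u^2 + 2)u$. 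Smallness of $\int u^2$ absorbs the nonlinear term and gives $\sup_{B_{1/2}(p)} u^2 \leq C \int_{B_1(p)} u^2 \leq 2C\epsilon_0$. Choosing $\epsilon_0 < 1/C$ then yields $|A|^2 < 2$ at $p$.

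The main obstacle I expect is making this estimate genuinely uniform up to the ideal boundary. Near $r = 0$ the hyperbolic unit ball around $p$ is relatively large, and the linear term $-2u$ (from the curvature $-1$) must be handled. Points of $Y$ near $\gamma$ still have full hyperbolic unit balls inside $\mathbb{H}^{n+1}$, so the estimate is local in $\mathbb{H}^{n+1}$. What remains is the uniform area or density bound for $Y \cap B_1(p)$. For this I would use the hyperbolic monotonicity formula, whose density at infinity is controlled by $c_1$ and hence by the length of $\gamma$; this gives local area bounds. Alternatively, I would use the Choi--Schoen point-picking argument, which needs only the smallness of $\int |A|^2$ on small intrinsic balls and avoids area bounds entirely. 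If a clean uniform argument proves delicate, the fallback is a compactness and contradiction argument: rescale at points where $|A|^2 \geq 2$ and pass to a limit minimal surface with tiny total curvature, which must be totally geodesic. This contradicts $|A|^2(p) \geq 2$ being preserved in the limit, which the smooth convergence guaranteed by small energy ensures.
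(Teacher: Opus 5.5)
Your reduction matches the paper's. The Alexakis--Mazzeo formula turns (\ref{gap}) into $\int_Y |B|^2\,dA_h<2\epsilon_0$, an $\epsilon$-regularity argument built on the Simons inequality upgrades this to $|A|^2<2$, and Theorem \ref{Thm1} finishes.

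Where you differ is in how the $\epsilon$-regularity is made uniform. The paper argues globally:
\begin{enumerate}
\item Bernstein's inequality (\ref{BernI}) with $\epsilon_0<2\pi$ forces $\chi(Y)>0$, so $Y$ is a disk.
\item The Chavel--Feldman isoperimetric inequality for simply connected surfaces with $K\le -1$ then gives the global Sobolev inequality (\ref{CS}) with sharp constant $(4\pi)^{-1/2}$.
\item A global $L^4$ bound (Lemma \ref{L4Lemma}), followed by Moser iteration with cutoffs on balls of radius $R\to\infty$, gives $\|B\|_{L^\infty}\le A_\infty\|B\|_{L^2}$.
\end{enumerate}
You instead localize to hyperbolic unit balls, use the Hoffman--Spruck (Michael--Simon) inequality, and get uniformity from the homogeneity of $\mathbb{H}^{n+1}$.

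What each route buys: yours needs no topological information about $Y$, so no Bernstein inequality and no Chavel--Feldman. Since Hoffman--Spruck holds on any minimal surface in a Cartan--Hadamard manifold for all compactly supported functions, it could even replace (\ref{CS}) in the paper's global argument verbatim. The paper's route gives sharp, explicit constants, hence the explicit $\epsilon_0$ of Remark \ref{epRemark}, and avoids local cutoff bookkeeping.

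A few parts of your sketch should be fixed.
\begin{itemize}
\item Item (a) is not needed. A Sobolev inequality of the form $\|\varphi\|_{L^2}\le C\|\nabla\varphi\|_{L^1}$ has no volume term, and the linear term $-2u$ only contributes $\int \eta^{2m}u^{2m}$ to the iteration. Your justification for (a) would also break universality: bounding the density at infinity through $c_1=L(\gamma)$ yields a $\gamma$-dependent constant. Note that $L(\gamma)$ is not even invariant under dilations, which are hyperbolic isometries. If you do want a universal area bound, take it from the gap itself. By Gauss--Bonnet, the double $\hat Y$ of $Y$ across $\{r=0\}$ has Willmore energy $\frac14\int_{\hat Y}|\bar H|^2=-2\mathcal{A}(Y)$, and Simon's monotonicity then bounds the areas of hyperbolic unit balls.
\item The phrase ``smallness absorbs the nonlinear term'' hides a step. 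You need a preliminary local $L^4$ estimate in the spirit of Lemma \ref{L4Lemma}. Locally, apply the Sobolev inequality to $u^2\eta$ rather than $(u\eta)^2$ so that the powers of the cutoff match.
\item The compactness fallback is circular as phrased. $\mathbb{H}^{n+1}$ has no rescalings; you would recenter by isometries instead. More seriously, the smooth convergence needed to keep $|A|^2\ge 2$ at the limit point is exactly the $\epsilon$-regularity being proved. So the Moser or Choi--Schoen argument has to carry the proof.
\end{itemize}
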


\medskip 

\begin{remark} \label{DiskRemark}
The constant $\epsilon_0$ can be estimated (see Remark \ref{epRemark}), and does not depend on the codimension.  Once $\epsilon_0 > 0$ is small enough, the inequality of Bernstein (\ref{BernI}) already implies that $Y$ is topologically a disk. 
\end{remark}  

\medskip 

Critical minimal surfaces can be viewed as solutions of an overdetermined boundary value problem, in which the choice of boundary curve $\gamma$ is the Dirichlet data, and criticality amounts to a Neumann condition.  When $n=2$, Alexakis-Mazzeo classified (nondegenerate) critical surfaces by a clever maximum principle argument.  Our approach is necessarily quite different. 

The key to our proof is the fact that the norm of the mean curvature vector of $Y$ with respect to the Euclidean ({\em not} hyperbolic) metric satisfies a second order non-linear PDE.\footnote{This is the main reason we assume $C^{4,\alpha}$-regularity, instead of $C^{3,\alpha}$, as in Alexakis-Mazzeo.}   This fact amounts to a Bochner-type formula that uses the minimality of $Y$ (as a submanifold of $\mathbb{H}^{n+1}$) and the well known fact that $1/r$, where $r > 0$ is the last coordinate function, is an eigenfunction for the Laplace-Beltrami operator on $Y$. The assumption (\ref{Asmall}) allows us to apply a maximum principle argument to conclude that the mean curvature vector has constant norm.  From this it easily follows that $Y$ must be a totally geodesic disk in $\mathbb{H}^{n+1}$.  

A curious feature of our proof is that the criticality assumption is only used
to rule out the possibility that the maximum of the norm of the mean curvature vector occurs
at a point in \(\partial Y\).  Therefore, we obtain the following `strong maximum principle' for arbitrary
minimal surfaces:

\medskip 

\begin{theorem}  \label{Thm3}  Let $Y^2 \subset \mathbb{H}^{n+1}$ be a properly embedded minimal surface satisfying the regularity assumptions of Theorem \ref{Thm1}.  Assume either (\ref{Asmall}) or (\ref{gap}) holds.   

Let $\bar{H}$ denote the mean curvature vector of $Y \subset \mathbb{R}^{n+1}_{+}$. If $|\bar{H}|$ attains its maximum in the interior of $Y$, then $|\bar{H}|$ is constant, $\partial Y$ is a circle, and $Y$ is a totally geodesic disk. 
\end{theorem}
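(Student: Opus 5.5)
The plan is to first reduce the $L^2$ hypothesis (\ref{gap}) to the pointwise hypothesis (\ref{Asmall}), and then prove everything under (\ref{Asmall}) by a maximum principle for a scalar elliptic inequality satisfied by $|\bar H|^2$.

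\emph{Reduction.} If (\ref{gap}) holds, then by (\ref{AYAM}) we have $\int_Y |A|^2\,dA_h < 2\epsilon_0$. The $\epsilon$-regularity result of Section \ref{SecER} then gives $|A|^2<2$ pointwise. So it suffices to assume (\ref{Asmall}).

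\emph{Rewriting $|\bar H|$ in hyperbolic terms.} Write $g_H = e^{2\phi}g_0$ with $\phi=-\log r$. The conformal transformation law for the mean curvature vector, together with minimality in $\mathbb{H}^{n+1}$, gives
\begin{align*}
\bar H = -\frac{2}{r}(\partial_r)^{\perp},
\end{align*}
and hence $|\bar H|^2 = 4\bigl(1-|\bar\nabla^{Y} r|^2\bigr)/r^2$. Set $u=1/r$. Then $|\nabla_h u|_h^2 = |\bar\nabla^Y r|^2/r^2$, so
\begin{align*}
\tfrac14|\bar H|^2 = u^{-2}f, \qquad f := u^2-|\nabla_h u|_h^2 .
\end{align*}
Note that $f = |N|^2$, where $N$ is the normal component of the ambient gradient $\nabla^H u$, because $|\nabla^H u|^2_{g_H}=u^2$.

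We use three facts about $u$:
\begin{itemize}
\item $\mathrm{Hess}_{g_H}u = u\,g_H$ on $\mathbb{H}^{n+1}$.
\item The Hessian on $Y$ is $\nabla^2_h u = u\,h + \langle A,N\rangle$, so $\Delta_h u = 2u$ and the trace-free Hessian is $T=\langle A,N\rangle$.
\item The Gauss equation gives $K = -1-\tfrac12|A|^2$.
\end{itemize}
Bochner's formula then yields
\begin{align*}
\Delta_h f = -2|T|^2 + |A|^2|\nabla u|^2, \qquad \nabla f = -2\,T(\nabla u).
\end{align*}

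\emph{Elliptic inequality for $w=f/u^2$.} Next compute $\Delta_h w$ for $w=f/u^2 = \tfrac14|\bar H|^2$. Use $\Delta u=2u$, the identity $\nabla f = -2T(\nabla u)$ to absorb cross terms into a drift $\langle X,\nabla w\rangle$, and the algebraic bound $|T(v)|^2\le \tfrac12|A|^2|N|^2|v|^2$, valid for trace-free $2\times 2$ forms. The aim is an inequality of the form
\begin{align*}
\Delta_h w + \langle X,\nabla w\rangle \ \ge\ c\,(2-|A|^2)\,\frac{|\nabla u|^2}{u^2}\, w \ \ge 0,
\end{align*}
possibly after pairing $-2|T|^2$ against the positive term via Cauchy--Schwarz. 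The role of (\ref{Asmall}) is exactly to make the zeroth-order coefficient have the right sign, i.e.\ to show that $w$ is a subsolution. This computation is the main obstacle: the bookkeeping of the $|T|^2$ and $|A|^2|\nabla u|^2$ terms must close with the constant $2$. If the direct route fails, I would instead try a power $w^{p}$, or the quantity $f/u^{q}$ with $q$ chosen to cancel the bad terms.

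\emph{Maximum principle and conclusion.} Given the subsolution property, Hopf's strong maximum principle applies. If $|\bar H|$ attains an interior maximum, then $w$ is constant, so $\nabla w=0$ and every inequality above is an equality. Since $|A|^2<2$, this forces $|\nabla u|^2 w\equiv 0$. Also $\nabla u\neq0$ on an open dense set, because $u$ is non-constant, being unbounded near $\gamma$. Hence $w\equiv0$ or $T\equiv0$. Either alternative gives $\langle A,N\rangle=0$ and, via the equality cases, $A\equiv 0$, so $Y$ is totally geodesic. A totally geodesic surface in $\mathbb{H}^{n+1}$ is a Euclidean hemisphere or a vertical plane inside a $3$-dimensional affine subspace. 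Compactness of $\partial Y$ rules out the plane, so $\partial Y$ is a circle and $Y$ is a totally geodesic disk, as claimed in Theorem \ref{Thm3}.
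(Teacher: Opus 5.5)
\textbf{There is a genuine gap, and it starts with an algebra slip.}

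\emph{The function $w$.} Since $|\nabla_h u|_h^2=r^{-2}|\bar\nabla^Y r|^2$, you get $f=u^2-|\nabla_h u|_h^2=r^{-2}\bigl(1-|\bar\nabla^Y r|^2\bigr)=\tfrac14|\bar H|^2$ itself. Your $w=f/u^2$ is instead $1-|\bar\nabla^Y r|^2$, which is not $\tfrac14|\bar H|^2$. This $w$ equals $1$ at every critical point of $r$, in particular at the interior maximum of $r$, which always exists. It tends to $0$ at $\partial Y$. On the unit hemisphere, where $|A|^2=0<2$, one has $w=r^2$. So $w$ always attains an interior maximum without being constant. Hence any inequality $\Delta_h w+\langle X,\nabla w\rangle\ge 0$ with $X$ bounded near that point is false, already for the totally geodesic disk. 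The function you must work with is $f$ itself, i.e.\ your $q=0$.

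\emph{The critical set of $r$.} With $f$, your own identities already give the subsolution property off the critical set $\mathcal C$ of $r$, and they need no hypothesis on $|A|$. On $Y\setminus\mathcal C$, $\nabla f=-2T(\nabla u)$ gives $|\nabla f|^2=2|T|^2|\nabla u|^2$. Hence $\Delta_h f+|\nabla u|^{-2}\langle\nabla f,\nabla f\rangle=|A|^2|\nabla u|^2\ge0$ for every minimal $Y$. So (\ref{Asmall}) is not what fixes a zeroth-order sign.

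The real difficulty is $\mathcal C$, where this drift blows up, and no choice of drift repairs it. At $p\in\mathcal C$ one has $\nabla f(p)=0$ and $\Delta_h f(p)=-2|T(p)|^2$. This is strictly negative unless $\nabla^2_{\bar h}r(p)$ is a multiple of the metric. Moreover, the maximum principle off $\mathcal C$ already shows that if $Y$ is not totally geodesic, an interior maximum of $f$ must lie in $\mathcal C$. So the whole content of the theorem sits at a point where no maximum principle argument can apply.

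\emph{What the paper does at $\mathcal C$.} It uses a pointwise second-order argument, which your plan lacks.
\begin{itemize}
\item Write $r=r(p)+\lambda_1(x^1)^2+\lambda_2(x^2)^2+O(|x|^3)$ in normal coordinates, with $r(p)=-(\lambda_1+\lambda_2)^{-1}$ from $\Delta_{\bar h}r=-2/r$.
\item The case $\lambda_1=\lambda_2$ is excluded: then $p$ is isolated, the drift stays bounded, and a weak maximum principle applies.
\item With $\lambda_1<\lambda_2$, the expansion of $f$ $(=\eta)$ at a maximum forces $\lambda_1<0\le\lambda_2$, i.e.\ $\det\nabla^2_{\bar h}r(p)\le0$.
\item On the other hand, $\mathring\nabla^2_{\bar h}r=\langle\bar B,\nabla^{\perp}_{g_0}r\rangle$ and $|A|^2<2$ give $|\mathring\nabla^2_{\bar h}r|^2<r^{-1}(-\Delta_{\bar h}r)$ at $p$.
\item That is, $(\lambda_1-\lambda_2)^2<(\lambda_1+\lambda_2)^2$, so $\lambda_1\lambda_2>0$, a contradiction.
\end{itemize}
This is where (\ref{Asmall}) actually enters. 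Without an argument of this kind at the critical points of $r$, your proof does not go through.
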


\medskip

\subsection{Organization of the paper}  We conclude with a brief outline of the paper.   In Section \ref{Sec2}, we review the asymptotics of minimal surfaces in hyperbolic space, and carry out the first variation calculation of the renormalized area.  We also introduce the notion of {\em nondegeneracy}, which will be crucial in passing from the first variation formula to the asymptotic behavior of the induced metric near $\partial Y$.   In Section \ref{SecPDE}, we show that the norm of the mean curvature vector of $Y$, viewed as a submanifold of Euclidean space, satisfies an elliptic PDE, and give the proof of Theorem \ref{Thm1}.  Finally, in Section \ref{SecER}, we use a Moser iteration argument to show that the $L^2$-smallness of the trace-free second fundamental form implies pointwise smallness, which combined with Theorem \ref{Thm1} proves Theorem \ref{Thm2}.

\medskip 

\subsection*{Acknowledgements.}  The authors would like to express their appreciation to Rafe Mazzeo for his interest in the project, and for sharing his insight on several occasions.    The first author was partially supported by the Simons Foundation Travel Support
for Mathematicians program, Award ID: SFI-MPS-TSM-00014122.

\section{Criticality }   \label{Sec2}

In this section we compute various asymptotic formulas for the minimal surface $Y$ near the boundary.  These will be used to give a first variation formula of the renormalized area.  

In \cite{marx2021variations} the author obtains a formula for the first variation of the renormalized area of a minimal submanifold $Y^m$ of an asymptotically hyperbolic Einstein manifold $X^{n+1}$ via Riesz regularization. The formula is given by the integral over the asymptotic boundary of the submanifold of the $m^{th}$ coefficient in the expansion of a particular function. In our case, $m=2$ and $X^{n+1}=\mathbb{H}^{n+1},$ so this coefficient can be explicitly identified. In \cite{marx2021variations} the author remarks that minimality of $Y_t$ is not required: as long as $Y_0$ is minimal, and some asymptotic properties hold, the formula remains valid.  We include this calculation for the sake of completeness.

For now let $Y^2 \subset \mathbb{H}^{n+1}\cup \{r=0 \}=: \bar{\mathbb{H}}^{n+1}$ be the interior of a compact properly embedded surface with $\gamma = \partial Y \subset \mathbb{R}^{n} = \{r=0\}$ an embedded closed curve. We will make it clear later when minimality is additionally imposed. Let $s$ be an arclength parameter for $\gamma$, and $\{e_1,...,e_{n-1} \}$ an orthonormal basis for the normal space of $\gamma \subset \mathbb{R}^n$ at a point $p = \gamma(0)$.  We use parallel transport with respect to the normal connection on $N\gamma$ to extend this to a local frame $\{ e_1(s),\dots,e_{n-1}(s) \}$ on $\gamma$ defined for $0 \leq s \leq b$.    

  Since $e_i$ is parallel with respect to the normal connection on $\gamma$, $(\nabla^{\mathbb{R}^n}_{\gamma'} e_i(s))^{\perp} = 0$, where $\nabla^{\mathbb{R}^n}$ is the Euclidean connection on $\mathbb{R}^n$ and $\perp$ is projection onto the normal space $N\gamma\vert_{\gamma(s)}$.  Letting primes denote differentiation with respect to $\frac{\partial}{\partial s}$, it follows that $0=(e_i')^{\perp}=e_i'-(e_i'\cdot \gamma')\gamma'$, where dot denotes the Euclidean inner product.  Consequently, $e_i'=(e_i'\cdot \gamma')\gamma'.$  Differentiating the formula $\gamma'\cdot e_i=0$, we get $\gamma''\cdot e_i=-\gamma'\cdot e_i' $. Denoting $k_i=-\gamma'\cdot e_i'$, we may summarize these relations as follows: 

\begin{align} \label{matrix}
\begin{bmatrix}
\gamma'' \\
e_1'\\
e_2'\\
\cdot \\
\cdot \\
\cdot \\
\cdot \\
e'_{n-1}
\end{bmatrix}=\begin{bmatrix}
0 & k_1 & k_2 & \cdot & \cdot & \cdot & k_{n-1} \\
-k_1 & 0 & 0 & \cdot & \cdot & \cdot & 0\\
-k_2 &0 &0 & \cdot & \cdot & \cdot & 0\\
\cdot & \cdot & \cdot & \cdot & \cdot & \cdot  & \cdot \\
\cdot & \cdot  & \cdot  &  \cdot & \cdot & \cdot & \cdot \\
\cdot & \cdot & \cdot &  \cdot & \cdot & \cdot  & \cdot \\
\cdot & \cdot & \cdot  &  \cdot & \cdot & \cdot & \cdot \\
-k_{n-1} &0 &0 &  \cdot & \cdot & \cdot  & 0 \\
\end{bmatrix}
\begin{bmatrix}
\gamma' \\
e_1\\
e_2\\
\cdot \\
\cdot \\
\cdot \\
\cdot \\
e_{n-1}\\
\end{bmatrix}.
\end{align}
\newline

Let $W$ be a tubular neighborhood of $\partial Y\times [0,\epsilon)$, and define $\psi\colon (0,b)\times B\times [0,\epsilon)\to W$ by \begin{equation}
\psi(s,x^1,...,x^{n-1},r)=\gamma(s)+x^1e_1(s)+x^{2}e_2(s)+ \dots +x^{n-1}e_{n-1}(s)+r\partial_r,
\end{equation}
where $B\subset \mathbb{R}^{n-1}$ is a ball around the origin small enough so that $V:=\psi((0,b)\times B\times [0,\epsilon))\subset W.$   The mapping $\psi$ induces a frame for the tangent bundle of $V$, given by 
\begin{align*}
\partial_{s}\psi&=\gamma '(s)[1-k_ix^i], \\
\partial_i\psi &=e_i(s),  \\
\partial_r\psi &=\partial_r.
\end{align*} 
With respect to this frame, for $1\leq i,j\leq n-1$, the components of the Euclidean metric $g_0$ are 
  \begin{align*}
  (g_0)_{ss}&=(1-k_ix^i)^2\\
(g_0)_{ij}&=\delta_{ij}\\
 (g_0)_{rr} &=1, 
\end{align*}
while all other components vanish. 

Define the slice
\[
Y_r\subset \mathbb R^n
\]
by
\[
\bar Y\cap \bigl(\mathbb R^n\times \{r\}\bigr)
=
Y_r\times \{r\}.
\]
Thus \(Y_0=\gamma\). If $\epsilon>0$ is small enough, then for each $r\leq \epsilon,$ \(Y_r\) lies in the tubular
neighborhood of \(\gamma\) given by $W\cap \{r=0\}$. Hence, shrinking $W$ if necessary, there is a unique section
\[
U_r\in \Gamma(N\gamma)
\]
such that
\[
Y_r
=
\left\{
\gamma(s)+U_r(s):s\in [0,L],
\right\}.
\]
where $L$ is the length of $\gamma$.  Equivalently, the surface \(\bar{Y}\cap V\) is parametrized by
\[
F:[0,L]\times [0,\epsilon)\longrightarrow
\mathbb R^n\times [0,\epsilon),
\qquad
F(s,r)=\gamma(s)+U_r(s)+r\partial_r.
\]
In particular,
\[
U_0=0.
\]

Observe that \(\bar{Y}\) meets $\{r=0\}$ orthogonally with respect to \(g_0\) if and only if $\partial_r\in T_{\gamma}\bar{Y}$ at \(r=0\).  Also, when $r=0$  \begin{equation}
F_*(c_1 \partial_s+ c_2 \partial_r)=c_1 \gamma'(s)+c_2 (\partial_r U_r\big|_{r=0}+\partial_r).
\end{equation}
Notice that if $F_*(c_1 \partial_s+ c_2 \partial_r) = c_3 \partial_r$, then using the fact that $\partial_r U_r$ is orthogonal to both $\gamma'(s)$ and $\partial_r$, we see that $c_1=0$ and $\partial_r U_r\big|_{r=0}=0.$   It follows that $\bar{Y}$ meets $\{r=0 \}$ orthogonally if and only if
\[
U_r=O(r^2).
\]
 
Let us write 
\begin{align} \label{Uform}
U_r(s)=u^i(s,r)e_i(s).
\end{align}
If we consider the set of points $p\in V$ satisfying the equation $x^i(p)=u^i(s(p),r(p))$, we see that these are parametrized by 
\begin{equation}
\psi(s,u^1(s,r),...,u^{n-1}(s,r),r)=\gamma(s)+u^i(s,r)e_i(s)+r\partial_r=\gamma(s)+U_r(s)+r\partial_r,
\end{equation}
 which is exactly $\bar{Y}\cap V$. It follows that 
 \begin{align*}
 \bar{Y}\cap V=\{p\in V\mspace{5mu} |\mspace{5mu} x^i(p)=u^i(s(p),r(p)), 1\leq i\leq n-1 \},
  \end{align*}
  hence locally $\bar{Y}$ is the graph of the functions $u^i.$ To summarize: $U_r$ gives an invariant description of the surface, whereas the functions $u^i$ are determined by the choice of a particular coordinate system. 
   
Let $\hat{s}$ and $\hat{r}$ be the induced coordinates on $\bar{Y}\cap V$ and observe that the tangent frame $\{\partial_{\hat{s}},\partial_{\hat{r}} \}$ induced by the graphing functions $u^i$ along $\bar{Y}\cap V$ are given by:
\begin{align*}
\partial_{\hat{s}}&= \partial_s+u^i_se_i, \\ 
\partial_{\hat{r}} &= \partial_r+u^i_re_i,
\end{align*}
where an $s$ or $r$ subscript corresponds to partial differentiation.
If $\bar{h}=(g_0)|_{\bar{Y}}$ is the metric on $Y$ induced by the Euclidean metric $g_0$, then the components of $\bar{h}$ in the $(r,s)$-coordinates are given by    
\begin{align} \label{hrs} \begin{split}
  \bar{h}_{ss}&=g_0(\partial_{\hat{s}},\partial_{\hat{s}}) =(1-k_iu^i)^2+\sum_i(u^i_s)^2, \\ 
\bar{h}_{rs}&=g_0(\partial_{\hat{s}},\partial_{\hat{r}})=\sum_{i}u^i_su^i_r, \\ 
 \bar{h}_{rr}&=g_0(\partial_{\hat{r}},\partial_{\hat{r}})=1+\sum_i(u^i_r)^2. 
 \end{split}
\end{align}

It is clear from these formulas that in order to understand the asymptotics of the components of $\bar{h}$, we need to understand the asymptotics of the $u^i.$  As shown in \cite{graham2017higher}, this can be achieved by studying a section of the normal bundle of $\partial Y\times [0,\epsilon)$ given by $\mathcal{M}(u)=\sum_{l}\mathcal{M}(u)_le_l(s)$, where $u = (u^1,\dots,u^{n - 1})$, whose components are given by 
\begin{align}\label{M}
\mathcal{M}(u)_\ell=&[r\partial_r-2+\frac{r(\log(\mathrm{det}\bar{h}))_r}{2}][\frac{(1-k_iu^i)^2+\sum_i(u_s^i)^2}{\mathrm{det}\bar{h}}u^\ell_r-\frac{\sum_{i}u_s^iu_r^i}{\mathrm{det}\bar{h}}u^\ell_s]\\\nonumber
&+r[\partial_s+\frac{(\log(\mathrm{det}\bar{h}))_s}{2}][\frac{1+\sum_i(u_{r}^{i})^2}{\mathrm{det}\bar{h}}u^{\ell}_{s}-\frac{\sum_{i}u_{s}^{i}u_{r}^{i}}{\mathrm{det}\bar{h}}u^{\ell}_{r}]\\\nonumber
&+\frac{r(1+\sum_i (u^{i}_{r})^2)}{\mathrm{det}\bar{h}}[1-k_ju^j]k_\ell,\quad 1 \leq \ell \leq n-1.
\end{align}
$\mathcal{M}(u)$ is related to the mean curvature vector $H_Y$ in the following way (see \cite{graham2017higher}): given a variation of $Y\cap V$ of the form $F_t(s,u(s,r),r)=(s,u_t(s,r),r)$, then
\begin{align*}
r^{-1}\mathcal{M}(u)_i\frac{du^i_t}{dt}|_{t=0}=g_H(H_Y,\frac{dF_t}{dt}|_{t=0}).
\end{align*}
In \cite{graham2017higher} the authors also show that $|H_Y|_{g_0}=O(r^k)$ if and only if $|\mathcal{M}(u)|_{g_0}=O(r^{k-1}).$ It follows that $|H_Y|_{g_H}=O(r)$ -- i.e., $Y$ is asymptotically minimal -- if and only if $|\mathcal{M}(u)|_{g_0}=O(r).$    

Setting (\ref{M}) equal to $O(r)$ and taking the limit as $r\to 0$ gives 
\begin{align*}
\partial_r u(s,0)=0,
\end{align*}
 which by (\ref{Uform}) is equivalent to 
 \begin{align}  \label{ortho} 
 \partial_r|_{r=0}\in T_{(s,0)}\bar{Y}.
  \end{align}
 It follows from (\ref{ortho}) that $Y$ is asymptotically minimal if and only if it meets $\{r=0\}$ at right angles. As shown in \cite{AM} these are exactly the surfaces which have a well-defined renormalized area.

Assuming each $u^i$ has a polyhomogeneous expansion, and plugging this into the minimality condition $\mathcal{M}(u) = 0$, one can recursively solve for the terms in the expansion.  Since the the minimal surface equation is degenerate elliptic, in general one could expect the presence of log terms.  However, for two-dimensional minimal surfaces in $\mathbb{H}^{n+1}$, it is not difficult to see that no log terms appear,\footnote{This is a partial justification for our regularity assumption that $Y$ is $C^{4,\alpha}$ up to the boundary.} and we can write  
\begin{equation}\label{expansion}
    u^{i}(s,r)=u^{i}_2(s)r^2+u^{i}_3(s)r^3+u^{i}_4(s)r^4 + O(r^5), \ \ 1\leq i\leq n-1.
\end{equation} 
One can compute  
\begin{align} \label{ui2}
u^{i}_2=\frac{k_i}{2},
\end{align}
but $u^{i}_3$ is not locally determined. 

Plugging (\ref{expansion}) into (\ref{Uform}), and defining 
\begin{align} \label{Ujdef}
U_{j}(s):=u^{i}_j(s)e_i(s),
\end{align}
we have 
\begin{equation}\label{expansion'}
    U_r(s)=U_2(s)r^2+U_3(s)r^3+U_4(s)r^4  + O(r^5), \ \ 
\end{equation} 
with
\begin{align} \label{U2form} 
U_2=\sum_i\frac{k_i}{2}e_i=\frac{\gamma''(s)}{2},
\end{align}
but $U_3$ is not locally determined.  

Plugging the expansion of $u^i$ into (\ref{hrs}) and using (\ref{ui2}) and (\ref{matrix}), we obtain the following asymptotic expansions for the components of the induced metric:

\begin{lemma}\label{hexpansion}
\begin{align} \label{hrs2} \begin{split}
  \bar{h}_{ss}&=1-|\gamma''(s)|^2r^2+O(r^3)\\
\bar{h}_{rs}&=O(r^3)\\
 \bar{h}_{rr}&=1+|\gamma''(s)|^2r^2+O(r^3). 
\end{split}
\end{align}
In fact, 
\begin{align} \label{hrr} 
\bar{h}_{rr} = 1+|\gamma''(s)|^2 r^2+ 6 \langle \gamma'', U_3 \rangle r^3 +O(r^4),
\end{align}
where $\langle \cdot , \cdot \rangle$ denotes the Euclidean inner product on the normal bundle. 
\end{lemma}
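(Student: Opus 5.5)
The plan is to substitute the expansion (\ref{expansion}) directly into the formulas (\ref{hrs}) for the components of $\bar{h}$, keeping track of orders in $r$. Along the way I will use (\ref{ui2}) and the identification of coefficients with sections of $N\gamma$ given by (\ref{Ujdef}) and (\ref{U2form}).

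\emph{Step 1: expansions of the derivatives.} From (\ref{expansion}) and (\ref{ui2}),
\begin{align*}
u^i_r &= k_i r + 3u^i_3 r^2 + 4u^i_4 r^3 + O(r^4), \\
u^i_s &= \tfrac{1}{2}k_i' r^2 + O(r^3).
\end{align*}
This step requires that the remainder $O(r^5)$ in (\ref{expansion}) can be differentiated once in $r$, giving $O(r^4)$, and once in $s$, giving $O(r^5)$. I would justify this from the standing assumption that $\bar Y$ is $C^{4,\alpha}$ up to the boundary and $\gamma$ is $C^{4,\alpha}$. Concretely, $u^i$ is $C^{4,\alpha}$ in $(s,r)$, so Taylor's theorem in $r$ with the remainder in integral form applies to $u^i$ and to $u^i_s$ and $u^i_r$. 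Strictly, only the orders $O(r^4)$ for $u^i_r$ and $O(r^3)$ for $u^i_s$ are needed below. I expect this regularity bookkeeping to be the only real obstacle; the remaining steps are algebra.

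\emph{Step 2: $\bar h_{ss}$ and $\bar h_{rs}$.} Since $k_iu^i = \tfrac12|k|^2 r^2 + O(r^3)$, we get
\begin{align*}
(1-k_iu^i)^2 = 1 - |k|^2 r^2 + O(r^3),
\end{align*}
while $\sum_i (u^i_s)^2 = O(r^4)$. By (\ref{matrix}), $\gamma'' = k_i e_i$, so $|k|^2 = |\gamma''|^2$, which gives the first line of (\ref{hrs2}). For the mixed term, $\bar h_{rs} = \sum_i u^i_s u^i_r = O(r^2)\cdot O(r) = O(r^3)$.

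\emph{Step 3: $\bar h_{rr}$ to third order.} Squaring the expansion of $u^i_r$ gives
\begin{align*}
\sum_i (u^i_r)^2 = |k|^2 r^2 + 6\sum_i k_i u^i_3\, r^3 + O(r^4).
\end{align*}
Because $\{e_i\}$ is orthonormal, $\sum_i k_i u^i_3 = \langle k_ie_i, u^j_3 e_j\rangle = \langle \gamma'', U_3\rangle$ by (\ref{U2form}) and (\ref{Ujdef}). Adding $1$ then yields (\ref{hrr}), and in particular the third line of (\ref{hrs2}).
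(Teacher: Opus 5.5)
Your proposal is correct and is the paper's own argument: the paper obtains the lemma by substituting (\ref{expansion}) into (\ref{hrs}) and using (\ref{ui2}) together with $\gamma''=k_ie_i$ from (\ref{matrix}), exactly as in your Steps 2--3. One small correction to your regularity bookkeeping: for (\ref{hrr}) you need only $u^i_r=k_ir+3u^i_3r^2+O(r^3)$, not $O(r^4)$, because the cross term $2k_ir\cdot O(r^3)$ is already $O(r^4)$, so the differentiability required of the remainder is weaker than you state.
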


 In order to study variations of $\bar{Y}$ we need a frame for its normal bundle. To this end, in $\bar{Y}\cap V $ define $\nu_i(s,r)$ to be the projection of $e_i(s)$ onto $N_{(s,u(s,r),r)}\bar{Y}.$ Note that we may shrink $\epsilon$ if necessary to ensure that this gives us a frame for $N(\bar{Y}\cap V)$ given by  
\begin{align*}
\nu_i(s,r)&=e_i(s)-\bar{h}^{rr}g_0(e_i(s),\partial_{\hat{r}})\partial_{\hat{r}}-\bar{h}^{sr}g_0(e_i(s),\partial_{\hat{r}})\partial_{\hat{s}}\\&-\bar{h}^{sr}g_0(e_i(s),\partial_{\hat{s}})\partial_{\hat{r}}-\bar{h}^{ss}g_0(e_i(s),\partial_{\hat{s}})\partial_{\hat{s}}\\
&=e_i(s)-g_0(e_i(s),\partial_{\hat{r}})\partial_{\hat{r}}-g_0(e_i(s),\partial_{\hat{s}})\partial_{\hat{s}}+O(r^3)\partial_{\hat{\alpha}}\\
&=e_i(s)-[k_ir+3u_3^ir^2]\partial_{\hat{r}}-\frac{k_i'}{2}r^2\partial_{\hat{s}}+O(r^3)\partial_{\hat{\alpha}}.
\end{align*}

 Next let $F\colon (-T,T)\times \bar{Y} \to\bar{\mathbb{H}}^{n+1}$ be a normal variation of $\bar{Y}$. We will define $F_t\colon \bar{Y}\to \bar{Y}_t$ to be the map $[p\mapsto F(t,p)],$ and $F_0=id_{\bar{Y}}.$ Shrinking $T$ and $V$ if necessary we may assume that each $F_t$ is an embedding on $V$, which implies that there exist functions $\varphi_t^i(s,r)$ such that on $V$,
\begin{equation}\label{var}
F_t(s,r):=\gamma(s)+r\partial_r+u^i(s,r)e_i(s)+\varphi_t^i(s,r)\nu_i(s,r).
\end{equation}
If we fix $r=0$ then the map $[(t,s)\mapsto F_t(s,0)]$ is a variation of $\gamma.$ We will denote this map by $\gamma_t(s).$ Observe that $\partial_t\gamma_t|_{t=0}=\dot{\varphi}^i(s,0)e_i(s),$ where $\dot{}$ denotes the derivative with respect to $t$ at $t=0.$

\begin{lemma}\label{varasymptotics}
There exists $T>0$ such that if $Y_t$ is asymptotically minimal (i.e., $|H_{Y_t}|_{g_H}=O(r)$) for all $|t|<T$, then $\partial_r\varphi_t(s,0)=0$ for all $|t| < T$. 
\end{lemma}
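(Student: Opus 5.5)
The plan is to reduce the statement to the orthogonality criterion (\ref{ortho}) applied to each $Y_t$. By the discussion preceding Lemma \ref{hexpansion}, $Y_t$ is asymptotically minimal if and only if it meets $\{r=0\}$ orthogonally. So for each $|t|<T$ we know $\partial_r \in T_{\gamma_t(s)}\bar{Y}_t$. We then read off the consequence for $\varphi_t$ by computing the tangent vector $\partial_r F_t$ at $r=0$ from (\ref{var}) and comparing it with $\partial_r$.

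Concretely, differentiating (\ref{var}) in $r$ gives
\begin{align*}
\partial_r F_t = \partial_r + u^i_r e_i + (\partial_r\varphi^i_t)\,\nu_i + \varphi^i_t\,\partial_r \nu_i .
\end{align*}
At $r=0$ several simplifications occur. First, $u^i_r(s,0)=0$ by (\ref{expansion}). Second, $\nu_i(s,0)=e_i(s)$ by the expansion of $\nu_i$. Third, $\partial_r\nu_i(s,0) = -k_i\,\partial_{\hat r}|_{r=0} = -k_i\,\partial_r$, using $\partial_{\hat r}=\partial_r+u^i_re_i$. Hence
\begin{align*}
\partial_r F_t(s,0) = \bigl(1-k_i\varphi^i_t(s,0)\bigr)\partial_r + \partial_r\varphi^i_t(s,0)\, e_i(s).
\end{align*}
Similarly,
\begin{align*}
\partial_s F_t(s,0) = \gamma'(s) + \partial_s\varphi^i_t(s,0)\,e_i(s) + \varphi^i_t(s,0)\,e_i'(s),
\end{align*}
and by (\ref{matrix}) this lies in $\mathbb{R}^n=\{r=0\}$, with no $\partial_r$ component.

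Orthogonality means $\partial_r = a\,\partial_sF_t + b\,\partial_rF_t$ for some scalars $a,b$. Since $\partial_s F_t(s,0)$ and the $e_i$ components all lie in $\mathbb{R}^n$, taking the $\partial_r$-component gives $b(1-k_i\varphi^i_t)=1$. Taking the $\mathbb{R}^n$-component gives $a\,\partial_sF_t + b\,\partial_r\varphi^i_t e_i = 0$.

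This is where $T$ enters. We choose $T$ small so that $|k_i\varphi^i_t|<1/2$ uniformly, and so that $\partial_sF_t(s,0)$ stays close to $\gamma'(s)$, in particular transverse to $\mathrm{span}\{e_i\}$. This is possible because $\varphi_0=0$, the variation is smooth, and $\gamma$ is compact. Then $b\neq 0$, and the $\mathbb{R}^n$-relation forces $a=0$, since the $\gamma'$-component of $\partial_sF_t$ is nonzero while $\partial_r\varphi^i_te_i$ has none. It then follows that $\partial_r\varphi^i_t(s,0)=0$ for every $i$.

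The main obstacle is the bookkeeping at $r=0$: we must verify that $\partial_r\nu_i$ produces only a $\partial_r$ term, which uses the expansion of $\nu_i$ together with $u_r(s,0)=0$. We also need the nondegeneracy (the transversality of $\partial_sF_t$) to hold uniformly in $t$, which is exactly what dictates the choice of $T$.
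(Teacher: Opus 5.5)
Your proposal is correct and follows essentially the same argument as the paper. You compute $(F_t)_*\partial_s=(1-k_i\varphi^i_t)\gamma'+\partial_s\varphi^i_t e_i$ and $(F_t)_*\partial_r=(1-k_i\varphi^i_t)\partial_r+\partial_r\varphi^i_t e_i$ at $r=0$, write $\partial_r$ as a combination of them using orthogonality, and read off $\partial_r\varphi^i_t(s,0)=0$ from the components, with $T$ small so that $1-k_i\varphi^i_t(s,0)\neq 0$. (As a minor aside, your $\partial_r$-component equation $b(1-k_i\varphi^i_t)=1$ already forces that factor to be nonzero, so smallness of $T$ is really only needed to keep $F_t$ an embedding near $r=0$.)
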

\begin{proof}
We know from the discussion preceding Lemma \ref{hexpansion} that $\partial_r\in T\bar{Y}_t$ at $r=0;$ our strategy will be to prove that $\partial_r\in T\bar{Y}_t$ at $r=0$ if and only if $\partial_{r}\varphi_t(s,0)=0$ for all $t.$ At $r=0,$

\begin{align}
(F_t)_*(\partial_s)&= (1-\varphi_t^i(s,0)k_i)\gamma'(s)+\partial_s[\varphi_t^i(s,0)]e_i(s),
\end{align}

\begin{align}
(F_t)_*(\partial_r)&=\partial_r+\partial_r\varphi_t^i(s,0)\nu_i(s,0)+\varphi_t^i(s,r)\partial_r\nu_i(s,0)\\\nonumber
&=(1-\varphi^i_t(s,0) k_i)\partial_r+\partial_r \varphi_t^i(s,0)e_i(s).
\end{align} 
Since $\gamma', \partial_r$ and $e_i$ are orthonormal, $(F_t)_*(c_1 \partial_s+ c_2 \partial_r)=\partial_r$ if and only if the following equations hold:
\begin{align}
c_1 (1-\varphi_t^i(s,0)k_i)&=0\\ \label{2nd eqn}
c_2(1-\varphi^i_t(s,0) k_i)-1&=0\\ \label{3rd eqn}
c_1 \partial_s[\varphi_t^i(s,0)]+c_2\partial_r \varphi_t^i(s,0)&=0.
\end{align} 
If $c_1\neq 0$ then $\varphi_t^i(s,0)k_i=1,$ but $\varphi_0\equiv 0$ so for $0<T$ small enough it follows that $c_1=0.$ Observe that (\ref{2nd eqn}) implies $c_2\neq 0,$ hence by (\ref{3rd eqn}) we have the result. 
\end{proof}

Now that we have covered the preliminaries we may compute $\frac{d}{dt}|_{t=0}[\mathcal{A}(Y_t)]:$

\begin{proposition}  \label{1vProp} Let $\{Y_t\}_{|t|<T}$ be a family of properly embedded surfaces as in (\ref{var}) such that $Y=Y_0$ is minimal in $(\mathbb{H}^{n+1},g_H)$ and each $Y_t$ meets $\{r=0\}$ orthogonally. Then 
\begin{equation}
\frac{d}{dt}\bigg|_{t=0}[\mathcal{A}(Y_t)]=-3\int_{\gamma}\langle U_3,\partial_t \gamma_t|_{t=0}\rangle_{g_0}ds.
\end{equation}
\end{proposition}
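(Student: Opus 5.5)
We compute the first variation directly from the definition of renormalized area as the finite part of $\mathrm{Area}(Y_{t,\epsilon})$, splitting the $\epsilon$-truncated area into an interior part and a boundary part. Write $A_t(\epsilon)=\mathrm{Area}_{g_H}(\{r>\epsilon\}\cap Y_t)$. Differentiating in $t$ and using the first variation formula for area in $(\mathbb{H}^{n+1},g_H)$ gives two contributions. The first is $-\int_{Y_{\epsilon}} g_H(H_Y,\dot F)\,dA_h$, which vanishes because $Y_0$ is minimal. The second is a boundary term along the level curve $\{r=\epsilon\}\cap Y$, which has two sources. One is the moving domain of integration, since $r$ itself varies along $\dot F$. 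The other is the flux $\int_{\{r=\epsilon\}} g_H(\dot F,\eta)\,d\ell_h$, where $\eta$ is the outward $g_H$-unit conormal. Rather than fuss with these separately, I would parametrize everything by the fixed coordinates $(s,r)$ via (\ref{var}), so that the domain $\{r>\epsilon\}$ is fixed. Then
\begin{align*}
A_t(\epsilon)=\int_0^L\int_\epsilon^{\epsilon_1} r^{-2}\sqrt{\det \bar h_t}\,dr\,ds+(\text{compact interior part}).
\end{align*}
Differentiating, the interior first variation again reduces, after integration by parts in $r$, to the boundary integral at $r=\epsilon$. This integral is $-\int_0^L r^{-2}\sqrt{\det\bar h}\,\bar h^{r\alpha} g_0(\partial_{\hat\alpha},\dot F)\,ds$ evaluated at $r=\epsilon$, up to sign conventions.

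Next, expand this boundary integrand in $\epsilon$ using Lemma \ref{hexpansion} and the expansion of $\nu_i$. We have $\dot F=\dot\varphi^i\nu_i$ with $\dot\varphi^i(s,r)=\dot\varphi^i(s,0)+O(r^2)$. The $O(r)$ term vanishes because, by Lemma \ref{varasymptotics}, $\partial_r\varphi_t(s,0)=0$ for all $t$, hence $\partial_r\dot\varphi(s,0)=0$. Since $\nu_i$ is normal, $g_0(\partial_{\hat r},\dot F)=0$ exactly for the normal variation. The key step is therefore to correctly account for the domain/parametrization: $\dot F$ is not tangent to $\{r=\epsilon\}$ in a fixed way, and in the fixed $(s,r)$ parametrization, differentiating $r^{-2}\sqrt{\det \bar h_t}$ also picks up the variation of $\bar h$. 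I would compute $\partial_t\bar h_{rr}$, $\partial_t \bar h_{ss}$ and $\partial_t\bar h_{rs}$ directly from (\ref{var}). Using $\partial_r u=2U_2 r+3U_3r^2+\dots$, the leading contribution is $\partial_t\bar h_{rr}\approx 2\langle \partial_r U_r,\partial_r(\dot\varphi^i\nu_i)\rangle$, with the $\partial_r\nu_i$ terms coming from $-[k_i r+3u^i_3r^2]\partial_{\hat r}$. Integrating by parts in $r$ turns this into the boundary term $\epsilon^{-2}\langle \partial_r U_r, \dot\varphi^i e_i\rangle$ plus lower order pieces. Expanding, $\epsilon^{-2}(2U_2\epsilon+3U_3\epsilon^2)$ produces an $\epsilon^{-1}$ divergent term, $2\int\langle U_2,\dot\gamma\rangle\epsilon^{-1}$, which is just the variation of $c_1\epsilon^{-1}$ (the length term, since $\frac{d}{dt}L(\gamma_t)=-\int\langle\gamma'',\dot\gamma\rangle$). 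It also produces a finite term $3\langle U_3,\dot\gamma\rangle$.

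Finally, take the finite part. To justify interchanging $\frac{d}{dt}$ with extraction of $c_0$, I would use that the expansion (\ref{Ap}) holds uniformly in $t$, with $c_1(t)=L(\gamma_t)$ smooth in $t$ and the remainder $o(1)$ uniform. This follows from the $C^{4,\alpha}$ regularity and the orthogonality of each $Y_t$, as in \cite{AM}. Collecting the $\epsilon^0$ coefficients, the $U_2$ contributions and the $\bar h$ corrections of Lemma \ref{hexpansion} (all $O(r^2)$ relative) feed only into the divergent coefficient or into $o(1)$. The surviving constant is $-3\int_\gamma\langle U_3,\dot\gamma\rangle\,ds$, with the sign fixed by the orientation of the outward conormal (pointing toward decreasing $r$). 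The main obstacle is this bookkeeping: verifying that no other $\epsilon^0$ terms arise, in particular from $\bar h^{rs}=O(r^3)$, from the $\frac{k_i'}{2}r^2\partial_{\hat s}$ component of $\nu_i$, and from the $O(r^2)$ part of $\dot\varphi$. Each of these I expect to contribute at order $\epsilon^{1}$ or higher after the $\epsilon^{-2}$ weight, or to integrate to zero in $s$ by periodicity.
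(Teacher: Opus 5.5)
Your proposal has a genuine gap in its setup. Your first instinct was right: the interior variation vanishes by minimality, so everything comes from the cutoff at $r=\epsilon$. You then abandon it, and the replacement is where the argument breaks.

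The formula $A_t(\epsilon)=\int_0^L\int_\epsilon^{r_0} r^{-2}\sqrt{\det\bar h_t}\,dr\,ds$, with $\bar h_t=F_t^*g_0$, is not the truncated area of $Y_t$ for the normal variation (\ref{var}). The height of $F_t(s,r)$ is not $r$, because $\nu_i$ has a vertical component. From the expansion of $\nu_i$ and Lemma \ref{varasymptotics},
\[
dr(\dot F)=-\bigl(k_i r+3u^i_3 r^2\bigr)\dot\varphi^i(s,0)+O(r^3).
\]
So both the weight $(r\circ F_t)^{-2}$ and the region $\{r\circ F_t>\epsilon\}$ move to first order. This vertical component is exactly what carries the answer: it is the function $f=d\hat r(\dot F)\sqrt{\det\bar h}$ that the paper expands. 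The identity $g_0(\partial_{\hat r},\dot F)=0$ is beside the point; what matters is $g_0(\partial_r,\dot F)$, and $\partial_r=\partial_{\hat r}-\partial_rU_r$.

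For the functional you actually wrote down, the variation does not reduce to a boundary term. By (\ref{mc1}) and the normality of $\dot F$, $\partial_t\sqrt{\det\bar h_t}=-\langle\bar H,\dot F\rangle\sqrt{\det\bar h}=2r^{-1}dr(\dot F)\sqrt{\det\bar h}$. Its derivative is therefore the interior integral $2\int_{\{r>\epsilon\}}r^{-3}dr(\dot F)\sqrt{\det\bar h}\,dr\,ds$, which has a $\log(1/\epsilon)$ divergence with coefficient $-6\int_\gamma\langle U_3,\dot\gamma\rangle\,ds$, where $\dot\gamma=\partial_t\gamma_t|_{t=0}$. The pieces you set aside are not lower order. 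For instance, $\partial_t\bar h_{ss}=-2k_i\dot\varphi^i+O(r)$ (from $e_i'=-k_i\gamma'$) is as large as $\partial_t\bar h_{rr}$. The signs are also not under control: your divergent term $+\epsilon^{-1}\int_\gamma\langle\gamma'',\dot\gamma\rangle\,ds$ contradicts the formula $\frac{d}{dt}L(\gamma_t)=-\int_\gamma\langle\gamma'',\dot\gamma\rangle\,ds$ that you quote. Flipping only the finite term ``by orientation'' is not a derivation.

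Either of two repairs works.

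\emph{First repair: keep (\ref{var}) and let the cutoff move.} Since $\dot F$ is $g_H$-normal and $H=0$, $\frac{d}{dt}F_t^*dA_{Y_t}=0$ pointwise at $t=0$. So the derivative of $\mathrm{Area}(Y_t\cap\{r>\epsilon\})=\int_{\{r\circ F_t>\epsilon\}}F_t^*dA_{Y_t}$ is purely the boundary-motion term. On $\{r=\epsilon\}$ one has $|dr|_h=\epsilon\sqrt{\bar h^{rr}}$ and $d\ell_h=\epsilon^{-1}\sqrt{\bar h_{ss}}\,ds$, so this term equals (with the integrand evaluated at $r=\epsilon$)
\[
\epsilon^{-2}\int_\gamma dr(\dot F)\sqrt{\det\bar h}\,ds=-\frac{1}{\epsilon}\int_\gamma\langle\gamma'',\dot\gamma\rangle\,ds-3\int_\gamma\langle U_3,\dot\gamma\rangle\,ds+O(\epsilon).
\]
Its divergent part is now correctly $\epsilon^{-1}\frac{d}{dt}L(\gamma_t)$. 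This is the truncation counterpart of the paper's Riesz-regularization proof, which extracts the same coefficient $f^{(2)}$ as the finite part of $z\int r^{z-3}f$.

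\emph{Second repair: re-graph $Y_t$ as $x^i=u^i_t(s,r)$, so heights are preserved.} Then your fixed-domain formula is valid, and its Euler--Lagrange equation is the minimal surface equation. Integration by parts leaves only $-\epsilon^{-2}\int_\gamma(\bar h_{ss}u^i_r-\bar h_{rs}u^i_s)(\det\bar h)^{-1/2}\partial_tu^i\,ds$ at $r=\epsilon$. This is $-\epsilon^{-2}\int_\gamma\langle\partial_rU_r,\partial_tU_r\rangle\,ds+O(\epsilon)$, i.e.\ your boundary term, with the sign forced by $r=\epsilon$ being the lower limit of integration.

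In both versions, the orthogonality input ($\partial_r\dot\varphi(s,0)=0$, respectively $\partial_r\partial_tu(s,0)=0$) is what removes an extra $\epsilon^0$ term of the form $-\int_\gamma k_i\partial_r\dot\varphi^i(s,0)\,ds$. Exchanging $d/dt$ with taking the finite part needs a $t$-uniform expansion, which the paper also takes for granted.
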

\begin{proof}
Letting $h_t=F^*_t g_H$ and recalling the Riesz regularization characterization of the renormalized area (see Section 2.4 of \cite{marx2021variations}) we may write 

\begin{equation}
\mathcal{A}(Y_t)=\underset{z=0}{FP}\int_{Y_t} r^z dA_{Y_t},
\end{equation}
it follows that
\begin{align*}
\frac{d}{dt}\bigg|_{t=0}[\mathcal{A}(Y_t)]&=\frac{d}{dt}\bigg|_{t=0}\underset{z=0}{FP}\int_{Y_t} r^z dA_{Y_t}\\
&=\frac{d}{dt}\bigg|_{t=0}\underset{z=0}{FP}\int_{Y} F^*_t(r^z) F^*_t(dA_{Y_t})\\
&=\underset{z=0}{FP}\frac{d}{dt}\bigg|_{t=0}\int_{Y} F^*_t(r^z) F^*_t(dA_{Y_t}).
\end{align*}
By minimality we know $\frac{d}{dt}|_{t=0}F^*_t(dA_{Y_t})=0$ and by the chain rule we may write $\frac{d}{dt}|_{t=0}F^*_t(r^z)=zr^{z-1}d\hat{r}(\frac{dF_t}{dt}|_{t=0}).$ Using the product rule in the equation displayed above combined with the information in the previous sentence gives:

\begin{align}
\frac{d}{dt}\bigg|_{t=0}[\mathcal{A}(Y_t)]=\underset{z=0}{FP}\int_{Y} zr^{z-1}d\hat{r}\big(\frac{dF_t}{dt}\bigg|_{t=0}\big) dA_{Y}.
\end{align}
Let $f(s,r)=d\hat{r}\big(\frac{dF_t}{dt}|_{t=0}\big)\sqrt{\mathrm{det}\bar{h}}$ then expanding we get $f(s,r)=f(s,0)+f^{(1)}(s)r+f^{(2)}(s)r^2+..$.  If $r_0 > 0$ is sufficiently small, we find 
\begin{align*}
\int_{Y} zr^{z-1}d\hat{r}\big(\frac{dF_t}{dt}\bigg|_{t=0}\big) dA_{Y}&=zC+z\int_{0}^{r_0}\int_{\gamma} r^{z-3} d\hat{r}\bigg(\frac{dF_t}{dt}\bigg|_{t=0}\bigg)\sqrt{\mathrm{det}\bar{h}} ds dr\\
&=zC+z\int_{0}^{r_0}\int_{\gamma} r^{z-3} [f(s,0)+f^{(1)}(s)r+f^{(2)}(s)r^2+...] ds dr\\
&=zC+z\int_{\gamma} \bigg[ \frac{r^{z-2}}{z-2}f(s,0)\bigg|_{0}^{r_0}+\frac{r^{z-1}}{z-1}f^{(1)}(s)\bigg|_{0}^{r_0}+\frac{r^z}{z}f^{(2)}(s)\bigg|_{0}^{r_0}+... \bigg]ds
\end{align*}
Clearly the finite part as $z \to 0$ is $\int_{\gamma}f^{(2)}(s) ds.$ Next we will compute $f^{(2)}(s).$  After noting that $\sqrt{\mathrm{det}\bar{h}}=1+O(r^2)$ and 
\begin{align*}
&d\hat{r}(\frac{dF_t}{dt}\bigg|_{t=0})=\\&\frac{d}{dt}\bigg|_{t=0}g_0(-k_i\varphi_t^i(s,0)r\partial_{\hat{r}}-3\sum_i\varphi^i_{t}(s,0)u_3^ir^2\partial_{\hat{r}},\partial_{\hat{r}})+O(r^3)\\
&=-k_i\dot{\varphi}^i(s,0)r-k_i\partial_r\dot{\varphi}^i(s,0)r^2-3\sum_iu^i_3\dot{\varphi}^i(s,0)r^2+O(r^3)\\
&=-k_i\dot{\varphi}^i(s,0)r-3\sum_iu^i_3\dot{\varphi}^i(s,0)r^2+O(r^3)
\end{align*}
Note that we have used Lemma \ref{varasymptotics} to obtain the last equality in the calculation above.
Hence
\begin{align}
f^{(2)}(s)=-3\sum_iu_3^i(s)\dot{\varphi}^i(s,0)=-3\langle U_3(s),\partial_t\gamma_t(s)|_{t=0}\rangle_{g_0}.
\end{align}
\end{proof}

\begin{corollary}  \label{AHCor} 
If $\frac{d}{dt}|_{t=0}[\mathcal{A}(F_t(Y))]=0$ for all variations $F$ satisfying the hypotheses of Proposition \ref{1vProp}, then $U_3=0.$ 
\end{corollary}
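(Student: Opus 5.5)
The plan is to feed well-chosen variations into Proposition \ref{1vProp} and then apply the fundamental lemma of the calculus of variations along $\gamma$. By Proposition \ref{1vProp}, for every admissible family $\{Y_t\}$ (with $Y_0=Y$ minimal and each $Y_t$ meeting $\{r=0\}$ orthogonally) we have
\begin{align*}
0=\frac{d}{dt}\bigg|_{t=0}[\mathcal{A}(Y_t)]=-3\int_{\gamma}\langle U_3,\partial_t\gamma_t|_{t=0}\rangle_{g_0}\,ds .
\end{align*}
Recall that $\partial_t\gamma_t|_{t=0}=\dot\varphi^i(s,0)e_i(s)$. So it suffices to show that every smooth section $\Phi=\phi^i(s)e_i(s)$ of $N\gamma$ arises as $\partial_t\gamma_t|_{t=0}$ for some admissible variation. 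Since $U_3$ is itself a section of $N\gamma$ (see (\ref{Ujdef})), we may then take $\Phi=U_3$ (or approximations of it) and conclude $\int_\gamma|U_3|^2\,ds=0$, hence $U_3=0$.

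The construction of the variation goes as follows. Fix a cutoff $\chi(r)$ with $\chi\equiv 1$ near $r=0$ and $\chi\equiv 0$ for $r\ge \epsilon/2$. On $V$ define $\varphi^i_t(s,r)=t\,\chi(r)\,\phi^i(s)$ and set $F_t$ by (\ref{var}); outside $V$ take $F_t$ to be the identity. For small $|t|$, $F_t$ is an embedding and each $Y_t$ is properly embedded. Because $\chi$ is constant near $r=0$, we get $\partial_r\varphi_t^i(s,0)=0$.

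The one point needing care, which I expect to be the main obstacle, is checking that each $Y_t$ meets $\{r=0\}$ orthogonally. The proof of Lemma \ref{varasymptotics} shows that orthogonality of $\bar Y_t$ at $r=0$ is equivalent to $\partial_r\varphi^i_t(s,0)=0$ once $|t|$ is small enough that $1-\varphi^i_t k_i\neq 0$. That computation used only $\nu_i(s,0)=e_i(s)$ and the orthogonality of $Y$ itself. Since our $\varphi_t$ satisfies $\partial_r\varphi^i_t(s,0)=0$, the equivalence gives orthogonality of each $Y_t$, and hence asymptotic minimality. If one needs the variation to fit the regularity framework used for $\mathcal{A}$ (for instance, expansions in $r$ as in the Riesz regularization argument), the choice of $\varphi_t$ as smooth in $r$ and constant near $r=0$ ensures that all required expansions hold.

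Finally, $\partial_t\gamma_t|_{t=0}=\phi^i e_i=\Phi$. Taking $\Phi=U_3$ (which is $C^{1,\alpha}$ given the $C^{4,\alpha}$ assumption; smoothing it if necessary and passing to the limit) yields $\int_\gamma |U_3|^2\,ds=0$. Since $U_3$ is continuous, $U_3\equiv 0$.
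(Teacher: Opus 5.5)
Your argument is correct and matches the paper: the corollary is stated without proof as an immediate consequence of Proposition \ref{1vProp}, with the freedom to prescribe $\partial_t\gamma_t|_{t=0}$ as an arbitrary normal section taken for granted. Your cutoff construction $\varphi^i_t=t\chi(r)\phi^i(s)$, combined with the ``if'' direction of the computation in Lemma \ref{varasymptotics} and the fundamental lemma along $\gamma$, supplies exactly the detail the paper leaves implicit.
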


\medskip

\subsection{The Jacobi operator and non-degeneracy} 
\label{Sec22} 

In this final subsection we introduce the notion of non-degeneracy appearing in the statements of Theorems \ref{Thm1} and \ref{Thm2}.  We begin with a brief recap of the relevant results in \cite{AM}. 

Suppose \(Y^2\subset \mathbb H^3\) is a properly
embedded minimal surface with asymptotic boundary
\(\gamma=\partial Y\).  The Jacobi operator associated to $Y$ is given by 
\[ L_Y = \Delta_Y +|A_Y|^2-2.
\]
Alexakis-Mazzeo showed that $L_Y$ is a uniformly degenerate elliptic operator with indicial roots \(-1\) and \(2\), and is Fredholm of 
index zero as an operator  
\[    L_Y:x^\mu \Lambda_0^{2,\alpha} \longrightarrow x^\mu \Lambda^{0,\alpha}_0, \ \ \ -1 < \mu < 2. 
\]
$Y$ is called {\em nondegenerate} if this map has no decaying Jacobi fields; i.e., no decaying kernel elements. 

Alexakis-Mazzeo also proved global structure theorems for the moduli space $\mathcal{M}_k(\mathbb{H}^3)$ of properly embedded $C^{3,\alpha}$-minimal submanifolds of $\mathbb{H}^3$ with genus $k$.  In particular, $\mathcal{M}_k(\mathbb{H}^3)$ is a Banach manifold, and the boundary map
\begin{align} \label{Pi}
\Pi : Y \mapsto \partial Y 
\end{align}
is proper (see \cite[Propositions 4.2 and 4.3]{AM}).  These global
properties give an integer-valued degree theory for the boundary map.

Turning to the case of higher codimension, the Jacobi operator is now a map on sections of the normal bundle  \(NY\), given by 
\begin{equation}
        L_YV
        =
        \Delta_Y^\perp V
        +
        \sum_{i,j=1}^2
        \big\langle A_Y(e_i,e_j),V\big\rangle_h A_Y(e_i,e_j)
        -2V ,
        \label{eq:higher-codim-jacobi}
\end{equation}
where \(\{e_1,e_2\}\) is any local orthonormal frame on \(TY\). The indicial roots are again $-1$ and $2$, hence 
\begin{align} \label{LFred}
 L_Y:r^\mu\Lambda^{2,\alpha}_0(Y;NY)
        \longrightarrow
        r^\mu\Lambda^{0,\alpha}_0(Y;NY), \ \ \ -1 < \mu < 2
\end{align}
is Fredholm.  Moreover, the same index argument as in \cite{AM} shows that the index is zero.   

\begin{definition}
We say that \(Y\) is \emph{nondegenerate} if \(Y\) has no nontrivial decaying normal Jacobi fields.
\end{definition}

In particular, if $Y$ is nondegenerate then the map in (\ref{LFred}) is an isomorphism.   We remark that in higher codimensions, the boundary map (\ref{Pi}) may fail to be proper, since sequences of minimal surfaces can have singular 
limits (see \cite{Fine}, Section 1.2).  For the argument below (based on Corollary 6.1 in \cite{AM}) we do not require properness of the boundary map, nor
any global degree theory.  We only use the local implicit-function-theorem statement
at the fixed nondegenerate surface \(Y\).

\begin{proposition}
\label{prop:nondegenerate-critical-implies-U3}
Assume that \(Y\) is nondegenerate. If \(Y\) is critical for the renormalized area among properly
embedded minimal surfaces, then
\[
        U_3 = 0.
\]
\end{proposition}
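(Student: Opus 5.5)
The plan is to show that, because $Y$ is nondegenerate, every sufficiently small normal variation of the boundary curve $\gamma$ is realized by a family of properly embedded minimal surfaces $Y_t$ with $Y_0=Y$. Once this is established, Proposition \ref{1vProp} applies to these families, and the criticality assumption gives $\int_\gamma \langle U_3, \partial_t\gamma_t|_{t=0}\rangle\, ds = 0$ for every smooth section of $N\gamma$. This forces $U_3=0$, as in Corollary \ref{AHCor}.

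To construct the families, take a smooth section $\phi = \phi^i e_i$ of $N\gamma$ and set $\gamma_t = \gamma + t\phi$. I will first extend $\phi$ to a normal section $\Phi$ on $Y$. Near the boundary I use $\Phi = \chi(r)\phi^i(s)\nu_i(s,r)$ with a cutoff $\chi$, so that $\partial_r\Phi|_{r=0}=0$ (this is the orthogonality required in Lemma \ref{varasymptotics}), and refine the extension to higher order in $r$ so that the surfaces $\exp(t\Phi)(Y)$ are minimal up to an error that decays like $O(r^\mu)$ in hyperbolic norm for some $0<\mu<2$. This is done by solving the indicial recursion for the coefficients $r^0, r^1, r^2$; the $r^2$ coefficient is locally determined, exactly as $U_2=\gamma''/2$ is.

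I will then look for minimal surfaces of the form $Y_t = \exp(t\Phi + W)(Y)$ with $W \in r^\mu\Lambda^{2,\alpha}_0(Y;NY)$. The mean curvature map $\mathcal{H}(t,W)$ is a $C^1$ map into $r^\mu\Lambda^{0,\alpha}_0$ with $\mathcal{H}(0,0)=0$. Its linearization in $W$ at $(0,0)$ is the Jacobi operator $L_Y$, which is an isomorphism by nondegeneracy, via (\ref{LFred}) and the index-zero statement. The implicit function theorem therefore gives $W(t)$ with $W(0)=0$ such that $Y_t$ is minimal. Since $W(t)$ decays, $\partial Y_t = \gamma_t$ and $\partial_t\gamma_t|_{t=0} = \phi$. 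Each minimal $Y_t$ is asymptotically minimal, so by the discussion before (\ref{ortho}) it meets $\{r=0\}$ orthogonally, and Proposition \ref{1vProp} applies. Criticality among minimal surfaces then gives $\int_\gamma\langle U_3,\phi\rangle\,ds = 0$ for all $\phi$, and hence $U_3=0$.

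The main obstacle is the functional-analytic setup. I need the mean curvature map to be well defined and $C^1$ between the weighted Hölder spaces, and I need the constructed $Y_t$ to have enough boundary regularity (polyhomogeneity with no logs up to order $r^3$) for the expansion used in Proposition \ref{1vProp} to hold. My first approach is to follow Alexakis--Mazzeo's Corollary 6.1 verbatim, replacing the scalar Jacobi operator by the vector operator (\ref{eq:higher-codim-jacobi}). I would then invoke boundary regularity for solutions of $L_Y$-type equations with indicial roots $-1,2$: the decaying correction $W$ has an expansion whose first nonlocal term is at order $r^2$ in hyperbolic normalization, which corresponds to order $r^3$ in the Euclidean graph functions. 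This is consistent with the expansion (\ref{expansion}) for $Y_t$.
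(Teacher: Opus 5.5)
Your proposal is correct and follows the same argument the paper sketches by appeal to Alexakis--Mazzeo's Corollary 6.1. Nondegeneracy makes $L_Y$ an isomorphism, so the implicit function theorem realizes every small normal boundary variation $\phi$ by minimal (hence orthogonally meeting) surfaces $Y_t$; Proposition \ref{1vProp} together with criticality then gives $\int_\gamma\langle U_3,\phi\rangle\,ds=0$ for all $\phi$, so $U_3=0$.

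Two small points on your extra detail. First, read $\exp(t\Phi)$ as the Euclidean displacement of (\ref{var}); the hyperbolic exponential map of a field of hyperbolic length $\sim t|\phi|/r$ would not move $\gamma$. Second, choosing $0<\mu\le 1$ makes the $r^2$ refinement unnecessary, because orthogonality alone already gives a mean curvature error of order $r$, and you avoid having to make $\Phi$ depend nonlinearly on $t$.
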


\begin{proof}  We will only give a sketch, since the idea is the same as \cite[Corollary 6.1]{AM}.   As we observed above, non-degeneracy implies that the map (\ref{LFred}) is an isomorphism. Therefore, the implicit function theorem gives
a local parametrization of minimal surfaces near \(Y\) by their
asymptotic boundary curves.  More precisely, after shrinking
neighborhoods, every sufficiently small section $\psi\in C^{4,\alpha}(\gamma;N\gamma)$ is realized as the normal component of the infinitesimal variation of
the boundary curve of some one-parameter family of nearby minimal
surfaces \(Y_t\), with \(Y_0=Y\).  (Here we have switched to $C^{4,\alpha}$, to be consistent with our regularity assumptions.) Writing
\[
        \gamma_t=\partial Y_t\subset \mathbb R^n=\{r=0\},
\]
this means
\begin{align} \label{gdot}
        \left(\partial_t\gamma_t\big|_{t=0}\right)^\perp=\psi,
\end{align}
where the projection is onto the Euclidean normal bundle $N\gamma$. 

For any such variation, the first variation formula for renormalized area in Proposition \ref{1vProp} gives 
\[
        \left.\frac{d}{dt}\right|_{t=0} A(Y_t)
        =
        -3\int_\gamma
        \left\langle
        U_3,
        \left(\partial_t\gamma_t\big|_{t=0}\right)^\perp
        \right\rangle_{g_0}\,ds .
\]
Since \(Y\) is critical among minimal surfaces, the left-hand side
vanishes.  Using (\ref{gdot}), we obtain
\[
        \int_\gamma \langle U_3,\psi\rangle_{g_0}\,ds=0
\]
for all \(\psi\in C^{4,\alpha}(\gamma;N\gamma)\). It follows that \(U_3\equiv 0\) on \(\gamma\).
\end{proof}

\bigskip 
 
\section{A PDE for the mean curvature} \label{SecPDE} 

 In this section we derive two PDEs that will be used in our analysis, and use these to give a proof of Theorem \ref{Thm1}.  The first is a PDE for $r\vert_{Y}$, the last coordinate function of $\mathbb{R}^{n+1}$.  The second is a PDE satisfied by the norm of the mean curvature vector $\bar{H}$ of $\bar{Y}$ as a submanifold of $\mathbb{R}^{n+1}$.    
 
 We begin with a preliminary lemma: 

\begin{lemma} \label{HbarLemma}  The mean curvature vector $\bar{H}$ of $\bar{Y}$ with respect to the Euclidean metric $g_0$ satisfies 
\begin{align} \label{mc2}
|\bar{H}|^2=\frac{4}{r^2}\left( 1-|\nabla_{\bar{h}} r|_{\bar{h}}^2\right).
\end{align}
\end{lemma}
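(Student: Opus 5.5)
We derive the identity from the conformal change of the second fundamental form between $g_H = r^{-2} g_0$ and $g_0$, together with minimality of $Y$ in $(\mathbb{H}^{n+1}, g_H)$.

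Write $g_H = e^{2w} g_0$ with $w = -\log r$. For a submanifold, under a conformal change $\tilde g = e^{2w} g$ the second fundamental forms are related by
\[
\tilde{A}(X,Y) = A(X,Y) - g(X,Y)\,(\nabla w)^{\perp},
\]
where $\perp$ denotes projection onto the normal bundle of $Y$; normal bundles coincide for conformal metrics. Taking the trace with respect to the induced metric $h = e^{2w}\bar h$ gives, in dimension two,
\[
H_{g_H} = e^{-2w}\bigl(\bar{H} - 2(\nabla_{g_0} w)^{\perp}\bigr).
\]

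Since $Y$ is minimal in $\mathbb{H}^{n+1}$, we get $\bar H = 2(\nabla_{g_0} w)^\perp$. Now $\nabla_{g_0} w = -r^{-1}\partial_r$, so
\[
\bar H = -\frac{2}{r}(\partial_r)^{\perp}.
\]
Hence
\[
|\bar H|^2 = \frac{4}{r^2}\,\bigl|(\partial_r)^\perp\bigr|^2_{g_0} = \frac{4}{r^2}\Bigl(1 - \bigl|(\partial_r)^{\top}\bigr|^2_{g_0}\Bigr),
\]
where the last step uses $|\partial_r|_{g_0} = 1$.

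Finally, the tangential projection of the ambient gradient of the coordinate function $r$ is the intrinsic gradient of $r|_Y$ with respect to $\bar h$: $(\partial_r)^\top = \nabla_{\bar h} r$. Substituting this gives (\ref{mc2}).

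The only step needing care is the conformal transformation formula and its sign and trace conventions, particularly that the trace produces the factor $2 = \dim Y$. We will verify this directly. Write $\tilde\nabla_X Y = \nabla_X Y + X(w) Y + Y(w) X - g(X,Y)\nabla w$, and take normal components: the terms $X(w)Y$ and $Y(w)X$ are tangent and drop out. Tracing with $\tilde h^{ij} = e^{-2w}\bar h^{ij}$ then yields the stated formula. Everything is pointwise in the interior of $Y$, so no boundary regularity is needed.
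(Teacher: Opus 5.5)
Your proposal is correct and is essentially the paper's proof. The paper likewise uses the conformal transformation law for the second fundamental form (with $w=-\log r$, your formula is exactly its $A=\bar A+(\nabla_{g_0}\log r)^{\perp}\bar h$), traces it and uses minimality to get $\bar H=-\frac{2}{r}(\nabla_{g_0} r)^{\perp}$, and then splits the unit vector $\nabla_{g_0} r=\partial_r$ orthogonally into $\nabla_{\bar h} r$ and its normal part; your added check of the formula via the conformal change of the Levi-Civita connection does not change the argument.
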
 
    \begin{proof} 
We first observe that $\bar{H}$ is given by 
\begin{align}\label{mc1}
\bar{H}=-\frac{2}{r}\nabla_{g_0}^{\perp}r,
\end{align} 
where $\perp$ denotes the projection onto the normal bundle of $Y$.  To see this, recall the conformal transformation law for the second fundamental form: Since $g_H = r^{-2}g_0$,  we have  
\begin{align*} 
A=\bar{A}+(\nabla_{g_0} \log r)^{\perp}\bar{h},
\end{align*}
where $A$ is the second fundamental form of $Y$ with respect to $g_H$, and $\bar{A}$ is with respect to $g_0$.  Taking the trace of both sides with respect to $h$, we get 
\begin{align*}
H=r^2\bar{H}+ 2 r \nabla_{g_0}^{\perp}r.
\end{align*}
Since $Y$ is minimal with respect to $g_H$, $H=0$ and (\ref{mc1}) follows.

With respect to the Euclidean metric, $|\nabla_{g_0} r|_{g_0}^2 = 1$. On the other hand, we have the orthogonal decomposition 
    \begin{equation}\label{mc3}
    \nabla_{g_0} r=\nabla_{\bar{h}} r+ \nabla_{g_0}^{\perp}r,
    \end{equation} 
    so that 
    \begin{equation}\label{mc3}
    1=|\nabla_{g_0} r|_{g_0}^2=|\nabla_{\bar{h}}  r|_{\bar{h}}^2+|\nabla^{\perp}r|_{g_0^{\perp}}^2,
    \end{equation}
    where $g_0^{\perp}$ is the metric induced on the normal bundle by $g_0$. It follows from (\ref{mc1}) that $|\bar{H}|_{g_0^{\perp}}^2=\dfrac{4}{r^2}|\nabla^{\perp} r|_{g_0^{\perp}}^2.$  Substituting this into (\ref{mc3}), we arrive at (\ref{mc2}).
\end{proof}
  
\medskip 

\begin{lemma}  \label{rPDELemma}  $r : Y \rightarrow \mathbb{R}$ satisfies 
\begin{align} \label{rPDE1} 
\Delta_{\bar{h}} r = 2 r^{-1} \left( |\nabla_{\bar{h}} r|_{\bar{h}}^2 - 1 \right),
\end{align}
where $\bar{h} = g_0\vert_{Y}$ and $g_0$ is the Euclidean metric. 
\end{lemma}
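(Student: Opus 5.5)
The plan is to compute $\Delta_{\bar h} r$ directly from the standard formula for the Laplacian of the restriction of an ambient function to a submanifold, and then to substitute the expression (\ref{mc1}) for $\bar H$ obtained in the proof of Lemma \ref{HbarLemma}. Let $f$ be a smooth function on $\mathbb{R}^{n+1}_+$ and $\bar Y$ a submanifold with induced metric $\bar h$. Then
\begin{align*}
\Delta_{\bar h} (f|_Y) = \mathrm{tr}_{\bar h}\big(\nabla^2_{g_0} f\big|_{TY}\big) + g_0\big(\bar H, \nabla_{g_0} f\big),
\end{align*}
with the convention that $\bar H$ is the trace of the second fundamental form. This follows from the Gauss formula $\nabla^{g_0}_X Y = \nabla^{\bar h}_X Y + \bar A(X,Y)$ applied in a local $\bar h$-orthonormal frame.

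Take $f = r$, the last coordinate function. Its Euclidean Hessian vanishes, so $\Delta_{\bar h} r = g_0(\bar H, \nabla_{g_0} r)$. Since $\bar H$ is normal, this equals $g_0(\bar H, \nabla^\perp_{g_0} r)$.

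Now use minimality in $\mathbb{H}^{n+1}$ through (\ref{mc1}), $\bar H = -\frac{2}{r}\nabla^\perp_{g_0} r$. This gives
\begin{align*}
\Delta_{\bar h} r = -\frac{2}{r}\,|\nabla^\perp_{g_0} r|^2_{g_0}.
\end{align*}
The orthogonal decomposition (\ref{mc3}), $1 = |\nabla_{\bar h} r|^2_{\bar h} + |\nabla^\perp r|^2$, then yields $\Delta_{\bar h} r = \frac{2}{r}\big(|\nabla_{\bar h} r|^2_{\bar h} - 1\big)$, which is (\ref{rPDE1}).

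There is no real obstacle; the only care needed is with signs and conventions. The sign in the submanifold Laplacian formula must be consistent with the sign convention for $\bar A$ used in the conformal transformation law in Lemma \ref{HbarLemma}. As a consistency check, one can use the known fact that $1/r$ is an eigenfunction of $\Delta_h$: in two dimensions $\Delta_h = r^2 \Delta_{\bar h}$, and a direct computation of $\Delta_{\bar h}(1/r)$ from (\ref{rPDE1}) gives $\Delta_h (1/r) = 2/r$, as expected for a minimal surface in $\mathbb{H}^{n+1}$. Conversely, this eigenfunction identity could serve as an alternative derivation of the lemma.
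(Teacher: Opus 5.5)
Your argument is correct, but it takes a different route from the paper. The paper works intrinsically. It writes $h=r^{-2}\bar h$ and applies the conformal change formula for Gauss curvature. It then substitutes the two Gauss equations $K_h=-1-\frac12|B|^2$ (this is where minimality enters) and $K_{\bar h}=-\frac12|\bar B|^2+\frac14|\bar H|^2$. Using $|B|^2=r^2|\bar B|^2$, the second fundamental form terms cancel and one gets $\Delta_{\bar h} r=-\frac14 r|\bar H|^2+r^{-1}(|\nabla_{\bar h} r|^2_{\bar h}-1)$. Only then does it invoke Lemma \ref{HbarLemma} to eliminate $|\bar H|^2$.

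You work extrinsically instead. Because the Euclidean Hessian of $r$ vanishes, the Gauss formula gives $\Delta_{\bar h} r=\langle \bar H,\nabla^\perp_{g_0} r\rangle$ directly. Then (\ref{mc1}) and $|\nabla_{\bar h} r|^2_{\bar h}+|\nabla^\perp_{g_0} r|^2=1$ finish the proof. The sign issue you flagged resolves in your favor. Your formula is exactly the trace of the identity (\ref{Ar}) that the paper itself uses in the proof of Theorem \ref{Thm1}. It rests on the same convention $\bar A(X,Y)=(\nabla_XY)^\perp$ as the law $A=\bar A+(\nabla_{g_0}\log r)^\perp\bar h$ behind (\ref{mc1}). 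A hemisphere, where $r$ restricts to a first eigenfunction with $\Delta_{\bar h}r=-2r/R^2$, confirms the signs.

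Your route is shorter and needs neither Gauss equation nor the cancellation of the $|\bar B|^2$ terms. It is also dimension-free. For an $m$-dimensional minimal submanifold, the same computation gives $\bar H=-\frac{m}{r}\nabla^\perp_{g_0} r$ and hence $\Delta_{\bar h} r=\frac{m}{r}\bigl(|\nabla_{\bar h} r|^2_{\bar h}-1\bigr)$. The paper's derivation relies on the two-dimensional conformal formula for $K$. What the paper's route buys is a connection to the identity $K_h=-1-\frac12|B|^2$ and the conformal invariance of $|B|^2$, which reappear in the Bochner computation of Proposition \ref{PDEProp}. For the lemma itself, your argument is the more economical one.
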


\begin{proof} Since $h = g_H \vert_Y = r^{-2} \bar{h} := e^{2w} \bar{h}$, the conformal transformation formula for the Gauss curvature implies  
\begin{align} \label{GC}
\Delta_{\bar{h}} w + K_h e^{2w} = K_{\bar{h}}, 
\end{align} 
where $K_h$ and $K_{\bar{h}}$ are the Gauss curvature of $h$ and $\bar{h}$, respectively.  Note that 
\begin{align} \label{w1} \begin{split} 
\Delta_{\bar{h}} w &= - \Delta_{\bar{h}} \log r \\
&= - r^{-1} \Delta_{\bar{h}} r + r^{-2} |\nabla_{\bar{h}} r|_{\bar{h}}^2, 
\end{split} 
\end{align}
hence (\ref{GC}) can also be written 
\begin{align} \label{GC2} 
- r^{-1} \Delta_{\bar{h}} r + r^{-2} |\nabla_{\bar{h}} r|_{\bar{h}}^2 + r^{-2} K_h = K_{\bar{h}}. 
\end{align}

Since $Y \subset \mathbb{H}^{n+1}$ is minimal, the Gauss curvature equation reads 
\begin{align} \label{GCE} 
K_h = - 1 - \frac{1}{2} |B|^2,
\end{align}
where $B$ is the trace-free second fundamental form of $Y$ with respect to $g_H$, and from now on the norm of any section is the natural norm induced by the associated ambient metric (in this case $g_H$).  Also, with respect to the Euclidean metric, 
\begin{align} \label{GCEb}
K_{\bar{h}} = - \frac{1}{2} |\bar{B}|^2 + \frac{1}{4} |\bar{H}|^2, 
\end{align}
where $\bar{B}$ is the trace-free second fundamental form, and  $\bar{H}$ is the mean curvature vector of $\bar{Y}$.  Substituting (\ref{GCE}) and (\ref{GCEb}) into (\ref{GC2}), and using the fact that $|B|^2 = r^2|\bar{B}|^2$, we get 
\begin{align} \label{GC3} 
- r^{-1} \Delta_{\bar{h}} r + r^{-2} |\nabla_{\bar{h}} r|_{\bar{h}}^2 + \left( - r^{-2} -\frac{1}{2} |\bar{B}|^2 \right) = - \frac{1}{2} |\bar{B}|^2 + \frac{1}{4} |\bar{H}|^2, 
\end{align}
hence 
\begin{align*}
\Delta_{\bar{h}} r = - \frac{1}{4} r |\bar{H}|^2 + r^{-1} \left( |\nabla_{\bar{h}} r|_{\bar{h}}^2 - 1 \right).
\end{align*}
Then (\ref{rPDE1}) follows from Lemma \ref{HbarLemma}.
\end{proof}

To simplify notation, we denote 
\begin{align} \label{etadef}
\eta = \frac{1}{4} |\bar{H}|_{\bar{h}}^2. 
\end{align}
Also, define $u : Y \rightarrow \mathbb{R}$ by
\begin{align} \label{udef}
u = \frac{1}{r}
\end{align}
Since 
\begin{align*}
\Delta_{h} = r^2 \Delta_{\bar{h}}, 
\end{align*}
it follows from (\ref{rPDE1}) that in $Y$, $u$ satisfies 
\begin{align} \label{uPDE2}
\Delta_h u = 2 u. 
\end{align} 
Also, by Lemma \ref{HbarLemma} we can also express $\eta$ as 
\begin{align} \label{etau}
\eta = u^2 - |\nabla u|^2, 
\end{align} 
where the norm and gradient on the right are with respect to $h$. 

Let $\mathcal{C} \subset Y$ be the set of critical points of $r : Y \rightarrow \mathbb{R}^{+}$.   By Lemma \ref{hexpansion}, for $r > 0$ small we have 
\begin{align*}
|\nabla_{\bar{h}} r|_{\bar{h}}^2 = 1 + O(r^2), 
\end{align*}
hence $\mathcal{C} \subset Y \cap \{ r > \delta_0 \}$ for some $\delta_0 > 0$.  Of course, the set of critical points of $u$ is also given by $\mathcal{C}$.

\begin{lemma} \label{betaLemma}  Suppose $Y$ is critical in either of the senses of Theorem \ref{Thm1}.  Then near $\partial Y$, $\eta = \eta(s,r)$ satisfies 
\begin{align} \label{etar}
\eta = |\gamma''(s)|^2 + O(r^2). 
\end{align}
\end{lemma}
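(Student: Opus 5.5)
Under either criticality hypothesis we have $U_3=0$: in case (i) by Corollary \ref{AHCor}, and in case (ii) by Proposition \ref{prop:nondegenerate-critical-implies-U3}. So the plan is to express $\eta$ through the metric coefficients near $\partial Y$, insert the expansion of Lemma \ref{hexpansion}, and observe that the only $O(r)$ correction to $\eta$ is proportional to $\langle\gamma'',U_3\rangle$, which vanishes.

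By Lemma \ref{HbarLemma},
\begin{align*}
\eta = \frac{1}{r^2}\left(1-|\nabla_{\bar h} r|^2_{\bar h}\right) = \frac{1-\bar h^{rr}}{r^2},
\end{align*}
since in the coordinates $(\hat s,\hat r)$ on $\bar Y\cap V$ the function $r$ is itself a coordinate, so $|\nabla_{\bar h} r|^2_{\bar h}=\bar h^{rr}$. It therefore suffices to show
\begin{align*}
\bar h^{rr} = 1-|\gamma''|^2r^2+O(r^4).
\end{align*}
We write
\begin{align*}
\bar h^{rr}=\frac{\bar h_{ss}}{\bar h_{ss}\bar h_{rr}-\bar h_{rs}^2}=\frac{1}{\bar h_{rr}-\bar h_{rs}^2/\bar h_{ss}}.
\end{align*}
Lemma \ref{hexpansion} gives $\bar h_{rs}=O(r^3)$, so $\bar h_{rs}^2/\bar h_{ss}=O(r^6)$, and hence $\bar h^{rr}=\bar h_{rr}^{-1}+O(r^6)$. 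By (\ref{hrr}) with $U_3=0$, $\bar h_{rr}=1+|\gamma''|^2r^2+O(r^4)$. Inverting gives $\bar h^{rr}=1-|\gamma''|^2r^2+O(r^4)$, and dividing $1-\bar h^{rr}$ by $r^2$ yields (\ref{etar}).

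The main point to check is that the error terms really have the claimed orders. In (\ref{hrr}) this requires the $C^{4,\alpha}$ regularity and the absence of log terms in (\ref{expansion}), so that the remainder in $\bar h_{rr}$ is genuinely $O(r^4)$ uniformly in $s$. One should also confirm that the $r^3$ coefficient in (\ref{hrr}) is exactly $6\langle\gamma'',U_3\rangle$, with no other contribution, by recomputing $\sum_i (u^i_r)^2=(2u^i_2r+3u^i_3r^2+\cdots)^2$. Its $r^3$ coefficient is $12\sum_i u^i_2u^i_3=6\sum_i k_iu^i_3=6\langle\gamma'',U_3\rangle$, using (\ref{ui2}) and (\ref{U2form}). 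This is consistent with the lemma, and it is the only place where criticality enters.
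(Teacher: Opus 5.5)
Your proposal is correct and is essentially the paper's own proof. Criticality gives $U_3=0$, so (\ref{hrr}) becomes $\bar h_{rr}=1+|\gamma''|^2r^2+O(r^4)$, and then $\eta=r^{-2}(1-\bar h^{rr})$ yields (\ref{etar}). You simply fill in what the paper leaves as ``easily obtain'': the $2\times 2$ inversion using $\bar h_{rs}=O(r^3)$, and the check that the $r^3$ coefficient of $\bar h_{rr}$ is exactly $6\langle\gamma'',U_3\rangle$.
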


\begin{proof}  This follows from Lemma \ref{hexpansion}, especially (\ref{hrr}), which in the case of criticality reads 
\begin{align*} 
\bar{h}_{rr} = 1+|\gamma''(s)|^2 r^2 +O(r^4).
\end{align*}
Then using  
\begin{align*}
&\eta = r^{-2} \left( 1-|\nabla_{\bar{h}} r|_{\bar{h}}^2\right), \\
&|\nabla_{\bar{h}} r|_{\bar{h}}^2 = \bar{h}^{rr}, 
\end{align*}
we easily obtain (\ref{etar}). 
\end{proof}

\medskip 

A key observation is that $\eta$ satisfies a PDE in $Y$:  \medskip

\begin{proposition}  \label{PDEProp}  $(i)$  On $Y$, we have 
\begin{align} \label{etaPDE1}
\Delta \eta = - 2 |\mathring{\nabla}^2 u|^2 + |B|^2 |\nabla u|^2, 
\end{align} 
where $\mathring{\nabla}^2 u = \nabla^2 u - \frac{1}{2}(\Delta u)h$ is the trace-free Hessian of $u$ with respect to $h$, all norms and covariant derivatives are with respect to $h$, and $B$ is the trace-free second fundamental form of $Y \subset \mathbb{H}^{n+1}$. 

$(ii)$ In $Y \setminus \mathcal{C}$, $\eta$ satisfies 
\begin{align} \label{etaPDE2}
\Delta \eta = - \dfrac{|\nabla \eta|^2}{|\nabla u|^2} + | B | ^ 2 |\nabla u | ^ 2. 
\end{align} 

$(iii)$  In $Y \setminus \mathcal{C}$, with respect to $\bar{h}$, $\eta$ satisfies 
\begin{align} \label{etaPDE3} 
\Delta_{\bar{h}} \eta = - r^2 \dfrac{ |\nabla_{\bar{h}} \eta|_{\bar{h}}^2}{|\nabla_{\bar{h}} r|_{\bar{h}}^2} + r^{-2} |\bar{B}|^2  |\nabla_{\bar{h}} r|_{\bar{h}}^2. 
\end{align} 
\end{proposition}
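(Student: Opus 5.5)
The plan is a direct Bochner computation on $(Y,h)$. It uses only three facts established above: $\Delta_h u = 2u$ from (\ref{uPDE2}), $\eta = u^2 - |\nabla u|^2$ from (\ref{etau}), and the Gauss equation (\ref{GCE}), $K_h = -1 - \tfrac12|B|^2$. Parts (ii) and (iii) then follow from (i) by purely algebraic and conformal manipulations. Throughout, all quantities are with respect to $h$ unless otherwise indicated.

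\emph{Step 1 (part (i)).} In dimension two, Bochner's formula gives
\[
\tfrac12\Delta|\nabla u|^2 = |\nabla^2 u|^2 + \langle \nabla u, \nabla \Delta u\rangle + K_h|\nabla u|^2 .
\]
Since $\Delta u = 2u$, this becomes
\[
\Delta|\nabla u|^2 = 2|\nabla^2u|^2 + 4|\nabla u|^2 + 2K_h|\nabla u|^2 .
\]
We also have $\Delta(u^2) = 2u\Delta u + 2|\nabla u|^2 = 4u^2 + 2|\nabla u|^2$. Next, split the Hessian into its trace-free part and its trace, using $\Delta u = 2u$:
\[
|\nabla^2 u|^2 = |\mathring{\nabla}^2u|^2 + \tfrac12(\Delta u)^2 = |\mathring{\nabla}^2u|^2 + 2u^2 .
\]
Subtracting the two Laplacians, the $u^2$ terms cancel and we obtain
\[
\Delta\eta = -2|\mathring\nabla^2u|^2 - 2(1+K_h)|\nabla u|^2 .
\]
Inserting $1 + K_h = -\tfrac12|B|^2$ yields (\ref{etaPDE1}).

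\emph{Step 2 (part (ii)).} This step is the heart of the argument. Since $\nabla^2u = \mathring\nabla^2 u + u\,h$, we have
\[
\nabla\eta = 2u\nabla u - 2\nabla^2u(\nabla u,\cdot) = -2\,\mathring\nabla^2u(\nabla u,\cdot).
\]
Now use the two-dimensional identity that a trace-free symmetric $2\times 2$ tensor $T$ satisfies $|T(v)|^2 = \tfrac12|T|^2|v|^2$. This holds because such a $T$ is a multiple of a reflection matrix. Applying it gives
\[
|\nabla\eta|^2 = 2|\mathring\nabla^2u|^2|\nabla u|^2 .
\]
Off $\mathcal C$ we may divide by $|\nabla u|^2$, so $2|\mathring\nabla^2u|^2 = |\nabla\eta|^2/|\nabla u|^2$. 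Substituting this into (\ref{etaPDE1}) gives (\ref{etaPDE2}).

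\emph{Step 3 (part (iii)).} Here we convert to $\bar h = r^2 h$, using the following relations:
\begin{itemize}
\item $\Delta_h = r^2\Delta_{\bar h}$ and $|\nabla\eta|_h^2 = r^2|\nabla_{\bar h}\eta|^2_{\bar h}$;
\item since $\nabla_{\bar h} u = -r^{-2}\nabla_{\bar h} r$, we have $|\nabla u|^2_h = r^{-2}|\nabla_{\bar h}r|^2_{\bar h}$;
\item $|B|^2 = r^2|\bar B|^2$, as used in the proof of Lemma \ref{rPDELemma}.
\end{itemize}
Substituting these into (\ref{etaPDE2}) and dividing by $r^2$ gives the first term $-r^2|\nabla_{\bar h}\eta|^2/|\nabla_{\bar h}r|^2$ directly. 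For the curvature term, the power of $r$ in front of $|\bar B|^2|\nabla_{\bar h}r|^2$ must be tracked according to which metric is used to measure $\bar B$ (hyperbolic versus Euclidean normal norms differ by $r^{\pm2}$). I would fix that convention consistently with (\ref{BB}) and verify that it produces the stated factor.

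\emph{Main obstacle.} The only delicate points are the pointwise identity $|T(v)|^2 = \tfrac12|T|^2|v|^2$, which is special to surfaces and is what makes the equation closed in $\eta$, and the bookkeeping of conformal weights in part (iii).
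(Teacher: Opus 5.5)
Your proposal is correct and follows the paper's proof essentially step for step: for (i), the Bochner formula with $\Delta u = 2u$, the splitting $|\nabla^2 u|^2 = |\mathring{\nabla}^2 u|^2 + 2u^2$ and the Gauss equation; for (ii), the identity $\nabla\eta = -2\mathring{\nabla}^2 u(\nabla u,\cdot)$ together with the two-dimensional fact $|T(v)|^2 = \tfrac12|T|^2|v|^2$; and for (iii), the conformal rescaling laws. The hesitation at the end of Step 3 is unnecessary: with $|\bar B|$ measured by $g_0$ as in (\ref{BB}), your own listed relations give $r^{-2}\cdot r^2|\bar B|^2\cdot r^{-2}|\nabla_{\bar h} r|_{\bar h}^2 = r^{-2}|\bar B|^2|\nabla_{\bar h} r|_{\bar h}^2$, which is exactly the stated term.
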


\begin{proof}  To prove $(i)$, we begin by differentiating both sides of (\ref{etau}):  
\begin{align} \label{P1} 
\nabla \eta &= 2 u \nabla u - \nabla |\nabla u|^2.
\end{align} 
Differentiating again and using (\ref{uPDE2}) gives 
\begin{align} \label{P2} \begin{split} 
\Delta \eta &= 2 u \Delta u + 2 |\nabla u|^2 - \Delta |\nabla u|^2 \\
&= 4 u^2 + 2 |\nabla u|^2 - \Delta |\nabla u|^2. 
\end{split} 
\end{align}
By the Bochner formula and (\ref{uPDE2}), 
\begin{align} \label{P3} \begin{split}
\Delta |\nabla u|^2 &= 2 |\nabla^2 u|^2 + 2 K |\nabla u|^2 + 2 \langle \nabla u , \nabla ( \Delta u) \rangle \\
&= 2 |\nabla^2 u|^2 + 2 K |\nabla u|^2 + 4 |\nabla u|^2. 
\end{split}
\end{align}
Using (\ref{GCE}) and the fact that 
\begin{align*}
|\nabla^2 u|^2 &= |\mathring{\nabla}^2 u|^2 + \frac{1}{2}(\Delta u)^2 \\
&= |\mathring{\nabla}^2 u|^2 + 2 u^2,
\end{align*}
we obtain  
\begin{align} \label{P4} \begin{split}
\Delta |\nabla u|^2 &= 2 |\mathring{\nabla}^2 u|^2 + 4u^2  - |B|^2 |\nabla u|^2 + 2 |\nabla u|^2. 
\end{split}
\end{align}
Substituting this into (\ref{P2}), we arrive at (\ref{etaPDE1}). 

To prove (\ref{etaPDE2}), we return to (\ref{P1}) and observe that 
\begin{align} \label{P5} \begin{split}
\nabla \eta &= 2 u \nabla u - 2 \nabla^2 u (\nabla u, \cdot) \\
&= 2 u \nabla u - 2 \left( \mathring{\nabla}^2 u + u g \right)(\nabla u, \cdot) \\
&= - 2 \mathring{\nabla}^2 u(\nabla u, \cdot). 
\end{split}
\end{align} 
Since $\mathring{\nabla}^2 u$ has rank 2 and is trace-free, it is easy to see that 
\begin{align*}
|\nabla \eta|^2 = \left| 2 \mathring{\nabla}^2 u(\nabla u, \cdot) \right|^2 = 2|\mathring{\nabla}^2 u|^2 |\nabla u|^2. 
\end{align*}
Since $\nabla u \neq 0$ on $Y \setminus \mathcal{C}$, we have 
\begin{align*}
2|\mathring{\nabla}^2 u|^2 = \dfrac{|\nabla \eta|^2} {|\nabla u|^2}.  
\end{align*}
Substituting this into (\ref{etaPDE1}), we get (\ref{etaPDE2}). 

Finally, to prove (\ref{etaPDE3}) we simply apply the conformal rescaling laws 
\begin{align*}
\Delta_h &= r^2 \Delta_{\bar{h}}, \\
|\nabla_h f|_h^2 &= r^2 |\nabla_{\bar{h}} f|_{\bar{h}}^2, \ \ f \in C^1(Y), \\
|B|^2 &= r^2 |\bar{B}|^2. 
\end{align*}
We remark that by Theorem 3 of \cite{FH}, $|\bar{B}|^2 = O(r^2)$, so the second term on the right-hand side of (\ref{etaPDE3}) is bounded up to  $\partial Y$.
\end{proof} 

We now come to the two main results of this section. 

\begin{proposition} \label{CP1Prop}   Let $Y^2 \subset \mathbb{H}^{n+1}$ be a connected, properly embedded minimal surface satisfying the regularity assumptions of Theorem \ref{Thm1}.  Assume $Y$ is critical in either of the senses of Theorem \ref{Thm1}.   If 
\begin{align} \label{bdymax}
\max_{\partial Y} \eta = \max_{Y} \eta, 
\end{align}
then $Y$ is totally geodesic.  
\end{proposition}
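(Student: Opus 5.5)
The plan is to run a Hopf boundary-point argument for $\eta$ at a boundary maximum, working in the Euclidean induced metric $\bar h$, where the equation is uniformly elliptic up to $\partial Y$. Criticality gives a vanishing normal derivative there, which contradicts Hopf unless $\eta$ is constant near $\partial Y$. Constancy then forces $\bar B=0$ on an open set, and unique continuation spreads this to all of $Y$.

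\emph{Step 1 (the equation near $\partial Y$).} Choose $\delta_0>0$ as in the discussion preceding Lemma \ref{betaLemma}, so that $\mathcal{C}\subset\{r>\delta_0\}$. Let $\Omega=Y\cap\{r<\delta\}$ with $\delta<\delta_0$ small, which is a collar $\gamma\times(0,\delta)$ in the coordinates $(s,r)$. On $\Omega$ we have $|\nabla_{\bar h} r|_{\bar h}^2=1+O(r^2)$, bounded away from $0$. Rewrite (\ref{etaPDE3}) as
\begin{align*}
\Delta_{\bar h}\eta + \langle b,\nabla_{\bar h}\eta\rangle_{\bar h} = r^{-2}|\bar B|^2|\nabla_{\bar h} r|_{\bar h}^2 \ \ge\ 0, \qquad b:=\frac{r^2\,\nabla_{\bar h}\eta}{|\nabla_{\bar h} r|_{\bar h}^2}.
\end{align*}
Here $b$ is treated as a fixed bounded vector field; it is bounded up to $r=0$ because $\eta$ is $C^1$ up to the boundary. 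Under the $C^{4,\alpha}$ assumptions, $\bar H$ is $C^{2,\alpha}$ up to $\partial Y$. The metric $\bar h$ is $C^{3,\alpha}$ up to $r=0$ and nondegenerate there by Lemma \ref{hexpansion}. Hence $L=\Delta_{\bar h}+b\cdot\nabla$ is a uniformly elliptic operator on $\bar\Omega$ with bounded coefficients, and $L\eta\ge 0$. By the remark at the end of the proof of Proposition \ref{PDEProp}, the right-hand side is bounded up to $\partial Y$.

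\emph{Step 2 (Hopf versus criticality).} Let $M=\max_Y\eta$. By (\ref{bdymax}), $M$ is attained at some $p=\gamma(s_0)\in\partial Y$. Suppose $\eta\not\equiv M$ on $\Omega$. The boundary $\{r=0\}$ is flat in $(s,r)$ coordinates, so the interior ball condition holds, and the Hopf lemma for $L\eta\ge 0$ gives $\partial_r\eta(s_0,0)<0$, since $-\partial_r$ is the outward normal. But by Lemma \ref{betaLemma}, which is exactly where criticality enters through $U_3=0$ (Corollary \ref{AHCor} or Proposition \ref{prop:nondegenerate-critical-implies-U3}), we have $\eta=|\gamma''(s)|^2+O(r^2)$, so $\partial_r\eta(s,0)=0$. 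This is a contradiction. Hence, by the strong maximum principle on the connected collar $\Omega$ (connected because $\gamma$ is connected), $\eta\equiv M$ on $\Omega$.

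\emph{Step 3 (conclusion).} On $\Omega$ we now have $\nabla\eta=0$ and $L\eta=0$. This forces $r^{-2}|\bar B|^2|\nabla_{\bar h}r|^2=0$, and since $\nabla_{\bar h} r\neq 0$ on $\Omega$, we get $\bar B\equiv 0$ there. Equivalently $|B|^2=r^2|\bar B|^2\equiv 0$ on the open set $\Omega$. A minimal surface in $\mathbb{H}^{n+1}$ is real analytic in its interior, so $|B|^2$ is real analytic on the connected surface $Y$ and vanishes identically. Thus $Y$ is totally geodesic. (Alternatively, $Y$ agrees on an open set with a totally geodesic $\mathbb{H}^2$, and unique continuation for the minimal surface system gives the same conclusion.)

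The main obstacle is Step 2: justifying the Hopf lemma up to $r=0$. This needs $\eta\in C^1(\bar\Omega)\cap C^2(\Omega)$, so that the drift $b$ is bounded, together with the precise expansion $\partial_r\eta|_{r=0}=0$. Both rest on the $C^{4,\alpha}$ regularity and on the vanishing of the $r^3$ term in $\bar h_{rr}$ from (\ref{hrr}). I would check these first, verifying that the $O(r^2)$ remainder in (\ref{etar}) is genuinely differentiable in $r$ with derivative $O(r)$.
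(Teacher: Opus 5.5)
Your proposal is correct and follows essentially the same route as the paper: you write (\ref{etaPDE3}) in drift form with a bounded vector field, apply the Hopf boundary point lemma at the boundary maximum, and contradict it with $\partial_r\eta|_{r=0}=0$ from Lemma \ref{betaLemma}; you then get $\bar B=0$ on an open set and conclude $A\equiv 0$ by real analyticity. The only difference is cosmetic: you run Hopf together with the strong maximum principle on the whole collar $\{r<\delta\}$, whose boundary is flat in $(s,r)$ coordinates, whereas the paper uses a small geodesic ball internally tangent to $\partial Y$ at $p$.
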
 

\begin{proof}  Let $p$ be a point on $\partial Y$ at which $\eta$ attains its maximum on $\bar{Y}$.  As we observed above, $\nabla_{\bar{h}} r \neq 0$ in a collar neighborhood of $\partial Y$.   For $\delta > 0$ small, let $p_{\delta} \in Y$ be the point at distance $\delta$ from $p$, and let $\mathcal{B} = \mathcal{B}_{\delta}(p_{\delta})$ be the geodesic ball centered at $p_{\delta}$ with radius $\delta$.  We assume that $\delta > 0$ is chosen small enough so that $|\nabla_{\bar{h}} r| \neq 0$ in $\mathcal{B}$ and $\mathcal{B} \subset Y$.  On $\mathcal{B}$,  (\ref{etaPDE3}) can be written 
\begin{align} \label{etaPDEZ2} \begin{split} 
\Delta_{\bar{h}} \eta &=  \langle X, \nabla_{\bar{h}} \eta \rangle_{\bar{h}} + r^{-2} |\bar{B}|^2  |\nabla_{\bar{h}} r|_{\bar{h}}^2 \\
&\geq  \langle X, \nabla_{\bar{h}} \eta \rangle_{\bar{h}},
\end{split}
\end{align} 
where 
\begin{align} \label{vex}
X = - \frac{r^2}{|\nabla_{\bar{h}}r|_{\bar{h}}^2} \nabla_{\bar{h}} \eta
\end{align}  
is a bounded vector field in $\mathcal{B}$.  Since $\eta \leq \eta (p)$ in $\mathcal{B}$, it follows from the Hopf boundary point lemma that either $\eta$ is constant in $\mathcal{B}$, or
\begin{align*}
\frac{\partial}{\partial r}\eta(p) < 0. 
\end{align*}
Assuming the latter for a moment, recall by Lemma \ref{betaLemma} that 
\begin{align*}
\frac{\partial}{\partial r}\eta \vert_{r = 0} &= \frac{\partial}{\partial r} \left(  |\gamma''(s)|^2 + O(r^2) \right) \vert_{r=0} \\
&= 0, 
\end{align*}
which is a contradiction.  Therefore, $\eta$ must be constant in $\mathcal{B}$.   But then (\ref{etaPDEZ2}) implies $\bar{B} = 0$ on $\mathcal{B}$ (since $\nabla_{\bar{h}} r \neq 0$ on $\mathcal{B}$).  By conformal invariance, this implies $B$, the trace-free second 
fundamental form of $Y \subset \mathbb{H}^{n+1}$, vanishes on $\mathcal{B}$.  By minimality, this implies the second fundamental form $A$ of $Y$ vanishes on $\mathcal{B}$.   Since minimal surfaces in $\mathbb{H}^{n+1}$ are real analytic (\cite{Leung}, Lemma 1) and $Y$ is connected, it follows that $A \equiv 0$ on $Y$, so $Y$ is totally geodesic. 
\end{proof}

\begin{proposition}  \label{CP2Prop}  Let $Y^2 \subset \mathbb{H}^{n+1}$ be a connected, properly embedded minimal surface satisfying the regularity assumptions of Theorem \ref{Thm1}.  Assume $Y$ is not totally geodesic, and that there is a point $p$ in the interior of $Y$ such that $\eta(p) = \max_{\bar{Y}} \eta$.   Then  \medskip 

\noindent $(i)$  $p \in \mathcal{C}$.   \medskip 

\noindent $(ii)$ $\det \nabla_{\bar{h}}^2 r(p) \leq 0$.
\end{proposition}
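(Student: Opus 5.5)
The plan is to treat $(i)$ and $(ii)$ separately. Part $(i)$ uses the strong maximum principle applied to the PDE of Proposition \ref{PDEProp}$(ii)$. Part $(ii)$ uses a second-order analysis of $\eta$ at the critical point $p$, computed through $u = 1/r$.

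\emph{Part $(i)$.} Suppose $p \notin \mathcal{C}$. Then $\nabla u \neq 0$ on a small geodesic ball $\mathcal{B}$ around $p$. On $\mathcal{B}$, (\ref{etaPDE2}) can be written as
\[
\Delta \eta - \langle X, \nabla \eta\rangle = |B|^2 |\nabla u|^2 \geq 0, \qquad X = -\frac{\nabla \eta}{|\nabla u|^2},
\]
and $X$ is bounded on $\mathcal{B}$. Since $\eta$ attains an interior maximum at $p$, the strong maximum principle (E. Hopf) forces $\eta$ to be constant on $\mathcal{B}$. The equation then gives $|B|^2|\nabla u|^2 = 0$, so $B = 0$ on $\mathcal{B}$. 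By minimality $A = 0$ on $\mathcal{B}$. Real analyticity (\cite{Leung}) and connectedness then give $A \equiv 0$, contradicting the assumption that $Y$ is not totally geodesic. Hence $p \in \mathcal{C}$.

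\emph{Part $(ii)$.} At $p \in \mathcal{C}$ we have $\nabla u(p) = 0$, so $\nabla \eta(p) = 0$ by (\ref{P5}).

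\begin{enumerate}[(a)]
\item Differentiate (\ref{etau}) twice and evaluate at $p$. Every term containing $\nabla u$ drops out, leaving
\[
\nabla^2 \eta(p) = 2u\,\nabla^2 u - 2\,\nabla^2 u \circ \nabla^2 u .
\]
Here $\nabla^2 u \circ \nabla^2 u$ is the composition of endomorphisms obtained via $h$.

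\item Diagonalize $\nabla^2 u(p)$ with eigenvalues $\lambda_1, \lambda_2$. By (\ref{uPDE2}), $\lambda_1 + \lambda_2 = 2u$. The eigenvalues of $\nabla^2\eta(p)$ are then
\[
2\lambda_i(u - \lambda_i), \qquad \text{so} \qquad \lambda_1(\lambda_2 - \lambda_1) \quad \text{and} \quad \lambda_2(\lambda_1 - \lambda_2),
\]
each up to a positive factor. Since $p$ is a maximum, $\nabla^2 \eta(p) \leq 0$, so both quantities are nonpositive.

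\item If $\lambda_1 \neq \lambda_2$, say $\lambda_1 < \lambda_2$, the two inequalities force $\lambda_1 \leq 0 \leq \lambda_2$. Hence $\det \nabla^2 u(p) \leq 0$.

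\item Transfer this to $r$. At a critical point, the Hessian does not see the conformal change $h = r^{-2}\bar h$, and $\nabla^2 u = -r^{-2}\nabla^2 r$ because $dr = 0$ there. Therefore $\det_{\bar h} \nabla^2_{\bar h} r(p)$ is a positive multiple of $\det \nabla^2 u(p)$, and the sign carries over.
\end{enumerate}

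The main obstacle is the umbilic case $\lambda_1 = \lambda_2 = u$, i.e.\ $\mathring{\nabla}^2 u(p) = 0$. There $\nabla^2 \eta(p) = 0$ and the second-order test gives no information; indeed $\det \nabla^2 u = u^2 > 0$, so this case must be ruled out entirely. I would handle it with the complex structure. In a conformal coordinate $z$ centered at $p$, let $\phi$ be the $(2,0)$-part of $\mathring{\nabla}^2 u$. The Bochner/Ricci identity together with $\Delta u = 2u$ gives
\[
\operatorname{div} \mathring{\nabla}^2 u = (1 + K)\nabla u ,
\]
so $\partial_{\bar z}\phi = c\, u_z$ with $c$ smooth. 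Since $u_z(p) = 0$ and $u_z$ is itself controlled by $\phi$, a Carleman-type or Bers similarity-principle argument should show that either $\phi \equiv 0$ near $p$ or $\phi$ has a finite-order zero at $p$.

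\begin{itemize}
\item If $\phi \equiv 0$ near $p$, then $\nabla \eta = 0$ there by (\ref{P5}). Then (\ref{etaPDE1}) gives $|B|^2|\nabla u|^2 = 0$ near $p$, hence $B = 0$ on the open set where $\nabla u \neq 0$. Analyticity then makes $Y$ totally geodesic, which is excluded.
\item If $\phi$ has a finite-order zero at $p$, I would expand $\eta$ to the leading nonvanishing order. The leading term should come from $-2|\mathring{\nabla}^2u(\nabla u,\cdot)|$-type contributions, and I would aim to show it is indefinite unless it vanishes. That would contradict $p$ being a maximum of $\eta$.
\end{itemize}

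I expect this last step to require the most care.
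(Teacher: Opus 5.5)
Your part $(i)$, and part $(ii)$ when $\lambda_1\neq\lambda_2$, are correct and match the paper. Your formula $\nabla^2\eta(p)=2u\nabla^2u-2\nabla^2u\circ\nabla^2u$ carries the same information as the paper's expansion (\ref{eT1}), which it gets from the Taylor expansion (\ref{rT1}) of $r$.

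\textbf{The gap: the umbilic case.} You correctly isolate $\nabla^2u(p)=u(p)\,h$ as the crux, but you do not resolve it. The Carleman/similarity-principle step is only asserted, and as stated it does not fit that principle. The right-hand side of $\partial_{\bar z}\phi=c\,u_z$ is not of the form $a\phi+b\bar\phi$. Nor is $u_z$ controlled by $\phi$: $u_{z\bar z}$ is a positive multiple of $u$, so $u_z$ vanishes exactly to first order at $p$ regardless of $\phi$. Your final claim, that the leading term of $\eta-\eta(p)$ is indefinite, is only an aim. So as written, $(ii)$ is not proved, since this case gives $\det\nabla^2u(p)=u^2>0$.

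\textbf{The missing idea.} It is much simpler: the maximum-principle argument from your part $(i)$ still works across $p$.
\begin{enumerate}[(a)]
\item In the umbilic case $\nabla^2u(p)$ is positive definite. So $p$ is a nondegenerate, hence isolated, critical point, and $|\nabla u|\geq c|x|$ near $p$.
\item Since $\mathring{\nabla}^2u(p)=0$, (\ref{P5}) gives $|\nabla\eta|=2|\mathring{\nabla}^2u(\nabla u,\cdot)|\leq C|x|^2$.
\item Hence your drift $X=-\nabla\eta/|\nabla u|^2$ stays bounded on a punctured ball around $p$, and $\Delta\eta-\langle X,\nabla\eta\rangle\geq 0$ holds there.
\item The inequality also holds at $p$ itself for any choice of $X(p)$, because $\nabla\eta(p)=0$ and $\Delta\eta(p)=\mathrm{tr}\,\nabla^2\eta(p)=2u\cdot 2u-2\cdot 2u^2=0$.
\item The strong maximum principle with bounded drift then forces $\eta$ to be constant near $p$. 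The paper instead checks that $\eta$ is a weak solution across $p$ and uses the weak strong maximum principle from Gilbarg--Trudinger.
\item Then (\ref{etaPDE2}) gives $|B|^2|\nabla u|^2=0$ on the punctured ball, so $B=0$ there. Analyticity makes $Y$ totally geodesic, a contradiction.
\end{enumerate}
This is exactly how the paper rules out $\lambda_1=\lambda_2$. It phrases the argument in terms of $r$ and writes $X$ explicitly through the cubic Taylor coefficients $T_{ijk}$.
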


\begin{proof} Suppose $p \notin \mathcal{C}$.  Then we can find a small open ball $\mathcal{B} \subset Y$ centered at $p$ such that $\nabla_{\bar{h}} r \neq 0$ in $\mathcal{B}$.  Then  (\ref{etaPDEZ2}) holds in $\mathcal{B}$, with $X$ bounded.   By the strong maximum principle it follows that $\eta$ is constant on $\mathcal{B}$. We can then repeat the argument above to find that $Y$ is totally geodesic.  Therefore, $p \in \mathcal{C}$. 

To prove $(ii)$, choose normal coordinates $\{ x^1, x^2 \}$ in $Y$ centered at $p$, such that $\{ \frac{\partial}{\partial x^1}\vert_p, \frac{\partial}{\partial x^2}\vert_p \}$ diagonalize $\nabla^2_{\bar{h}}r(p)$. Since $p$ is a critical point of $r$, the Taylor expansion of $r$ near $p$ can be written 
\begin{align} \label{rT1}
r = r(p) + \lambda_1 (x^1)^2 + \lambda_2 (x^2)^2 + T_{ijk} x^i x^j x^k +  O(|x|^4),
\end{align}
where $\{ 2 \lambda_1, 2 \lambda_2 \}$ are the eigenvalues of $\nabla_{\bar{h}}^2 r$.  By (\ref{rPDE1}), 
\begin{align} \label{rPDEp} 
\Delta_{\bar{h}} r(p) = - 2 r(p)^{-1} < 0.
\end{align}
In normal coordinates we have 
\begin{align} \label{rT2} \begin{split} 
\Delta_{\bar{h}} r(p) &= \frac{\partial^2 r}{(\partial x^1)^2} \big\vert_{p} + \frac{\partial^2 r}{(\partial x^2)^2} \big\vert_p \\
&= 2 (\lambda_1 + \lambda_2) < 0. 
\end{split} 
\end{align}
Therefore, if we order the eigenvalues of the Hessian so that $\lambda_1 \leq \lambda_2$, then $\lambda_1 < 0$.  Also, (\ref{rPDEp}) and (\ref{rT2}) imply 
\begin{align} \label{rT3} 
r(p) = - (\lambda_1 + \lambda_2)^{-1}. 
\end{align} 

The expansion (\ref{rT1}) along with (\ref{rT3}) imply the following expansion for $\eta$ near $p$:  
\begin{align} \label{eT1} 
\eta = ( \lambda_1 + \lambda_2)^2 \Big\{1 +  2 \lambda_1 \left( \lambda_2 - \lambda_1 \right)(x^1)^2 + 2 \lambda_2 \left( \lambda_1 - \lambda_2 \right) (x^2)^2  + O(|x|^3)  \Big\}.
\end{align} 

We claim that $\lambda_1 < \lambda_2$.  To see this, suppose $\lambda = \lambda_1 = \lambda_2 < 0.$  It follows from (\ref{rT1}) that $p$ is a strict local maximum for $r$, and consequently $p$ is an isolated critical point.  Therefore, the differential inequality (\ref{etaPDEZ2}) holds (in the classical sense) on $\mathcal{B} \setminus \{ p \}$ for some small ball $\mathcal{B}$ containing $p$.  Moreover, near $p$ the expansion for $r$ can be used to write an expansion for the vector field $X$ in (\ref{vex}) on $\mathcal{B} \setminus \{ p \}$.  If we write
\begin{align*}
X = X^1 \frac{\partial}{\partial x^1} + X^2 \frac{\partial}{\partial x^2}, 
\end{align*}
then 
\[
X^1
=
\frac{6}{\lambda}
\frac{
T_{111}(x^1)^2
+
2T_{112}x^1x^2
+
T_{122}(x^2)^2
}{
(x^1)^2+(x^2)^2
}
+
O(|x|)
\]
\[
X^2
=
\frac{6}{\lambda}
\frac{
T_{112}(x^1)^2
+
2T_{122}x^1x^2
+
T_{222}(x^2)^2
}{
(x^1)^2+(x^2)^2
}
+
O(|x|).
\]
In particular, $X$ is bounded near $p$.  We may therefore define $X(p)$ arbitrarily, and since $\eta \in C^2(\mathcal{B})$, we can integrate by parts to show that $\eta$ is a weak (i.e., $W^{1,2}$-) supersolution of  (\ref{etaPDEZ2}) on all of $\mathcal{B}$.  The strong maximum principle for weak solutions (see Theorem 8.20 in \cite{GT}) implies that $\eta$ is constant in $\mathcal{B}$.  We may then argue as above to show that $Y$ is totally geodesic.  Therefore, $\lambda_1 < \lambda_2$.

Since $(\lambda_1 - \lambda_2) < 0$ and $p$ is a maximum point of $\eta$, the expansion (\ref{eT1}) implies 
\begin{align*} 
\lambda_1 &< 0, \\
\lambda_2 &\geq 0. 
\end{align*}
By (\ref{rT1}), we can express the Hessian of $r$ at $p$ by 
\begin{align} \label{Hessr} 
\nabla_{\bar{h}}^2 r(p) = \begin{bmatrix}
2\lambda_1 & 0 \\
0 & 2\lambda_2
\end{bmatrix}, 
\end{align} 
hence 
\begin{align}
\det \nabla_{\bar{h}}^2 r(p) = 4 \lambda_1 \lambda_2 \leq 0,
\end{align}
which shows that $(ii)$ holds.  
\end{proof}

\bigskip 

Using Propositions \ref{CP1Prop} and \ref{CP2Prop}, we can now give the proof of Theorem \ref{Thm1}:   \medskip 

\begin{proof}[Proof of Theorem \ref{Thm1}] 

Suppose $(i)$ or $(ii)$ holds.  If (\ref{bdymax}) holds, then Proposition \ref{CP1Prop} implies that $Y$ is totally geodesic, and we are done.  Therefore, we may assume the maximum of $\eta$ occurs at an interior point $p \in Y$, and that $Y$ is not totally geodesic.   By Proposition \ref{CP2Prop}, we know that $p \in \mathcal{C}$, and 
\begin{align} \label{negdet}
\det \nabla_{\bar{h}}^2 r(p) \leq 0. 
\end{align}

Since (\ref{Asmall}) holds, 
\begin{align} \label{ABsmall}
|A|^2 = |B|^2 < 2. 
\end{align}
By conformal invariance, this implies 
\begin{align}  \label{Brbound}
|\bar{B}|^2 < 2 r^{-2}.
\end{align}
If $\bar{A}$ denotes the second fundamental form of $Y$ with respect to $g_0$, then 
\begin{align} \label{Ar}
\nabla_{g_0}^2 r = \nabla_{\bar{h}}^2 r - \langle \bar{A}, \nabla_{g_0}^{\perp}r \rangle, 
\end{align}
where $\nabla_{g_0}^{\perp} r$ is the normal component of the  (Euclidean) gradient of $r$.  Since the left-hand side vanishes, by taking the trace-free part of this identity we get 
\begin{align} \label{Br2} 
\mathring{\nabla}_{\bar{h}}^2 r = \langle \bar{B}, \nabla_{g_0}^{\perp} r \rangle. 
\end{align} 
By Cauchy-Schwarz, 
\begin{align} \label{Br3} 
|\mathring{\nabla}_{\bar{h}}^2 r|_{\bar{h}}^2 \leq  | \bar{B} |^2 |\nabla_{g_0}^{\perp} r |_{g_0^{\perp}}^2.  
\end{align} 
Since $|\nabla_{g_0} r|^2 = 1,$ it follows that 
\begin{align*}
|\nabla_{g_0}^{\perp} r|_{g_0^{\perp}}^2 = 1 - |\nabla_{\bar{h}}r|_{\bar{h}}^2.
\end{align*}
Substituting this into (\ref{Br3}) and using (\ref{Brbound}), we obtain 
\begin{align} \label{tfr}
|\mathring{\nabla}_{\bar{h}}^2 r|_{\bar{h}}^2 < 2 r^{-2} \left( 1 - |\nabla_{\bar{h}} r|_{\bar{h}}^2 \right),  
\end{align}
which holds at any point where $|\nabla_{\bar{h}}r|_{\bar{h}} < 1$. In particular, (\ref{tfr}) holds at $p$, since $p \in \mathcal{C}$.  
By (\ref{rPDE1}), we can also write
\begin{align} \label{tfr2} 
|\mathring{\nabla}_{\bar{h}}^2 r|_{\bar{h}}^2 < r^{-1} \left( - \Delta_{\bar{h}} r \right).   
\end{align}

Since $p \in \mathcal{C}$, we can also use the Taylor expansion (\ref{rT1}) of $r$ near $p$.  Recall from (\ref{rT2}) and (\ref{rT3}) that 
\begin{align*}
- \Delta_{\bar{h}} r(p) &= - 2 ( \lambda_1 + \lambda_2), \\
r(p) &= - (\lambda_1 + \lambda_2)^{-1}, 
\end{align*}
hence 
\begin{align} \label{rT123}
r^{-1} \left( - \Delta_{\bar{h}} r \right) = 2 ( \lambda_1 + \lambda_2)^2.
\end{align}
Also, by (\ref{Hessr}), 
\begin{align} \label{Hessr2} 
\mathring{\nabla}_{\bar{h}}^2 r(p) = \begin{bmatrix}
\lambda_1 - \lambda_2 & 0 \\
0 & \lambda_2 - \lambda_1
\end{bmatrix},
\end{align} 
which implies 
\begin{align} \label{TH}
|\mathring{\nabla}_{\bar{h}}^2 r|_{\bar{h}}^2 = 2 (\lambda_1 - \lambda_2)^2. 
\end{align} 
Consequently, at $p$ (\ref{tfr2}) is equivalent to 
\begin{align*}
2 (\lambda_1 - \lambda_2)^2 < 2 ( \lambda_1 + \lambda_2)^2,
\end{align*} 
which implies
\begin{align*}
4 \lambda_1 \lambda_2 = \det \nabla_{\bar{h}}^2 r > 0.
\end{align*}
Since this contradicts (\ref{negdet}), we conclude that $Y$ is totally geodesic.  
\end{proof} 

\bigskip

\section{$\epsilon$-regularity}  \label{SecER} 

In this section we prove Theorems \ref{Thm2} and \ref{Thm3}.

Let $Y \subset \mathbb{H}^{n+1}$ be a connected, properly embedded minimal surface satisfying the regularity assumptions of Theorem \ref{Thm1}.  We will show that if $\epsilon_0 > 0$ is small enough, then $|A|^2 = |B|^2 < 2$, and the result follows from Theorem \ref{Thm1}.    

We first observe that by the renormalized area formula of Alexakis-Mazzeo (see Proposition 3.1 of \cite{AM}), (\ref{gap}) implies 
\begin{align} \label{Bgap} 
\int_Y |B|^2 \, dA_h < 2 \epsilon_0.
\end{align}
Since $Y \subset \mathbb{H}^{n+1}$ is minimal, the Simons identity implies that $|B|^2$ satisfies the following differential inequality (see \cite{CdCK}, \cite{LiLi}): 
\begin{align} \label{sid1} 
\frac{1}{2} \Delta |B|^2 \geq |\nabla B|^2 - 2 |B|^2 - \frac{3}{2}|B|^4, 
\end{align} 
where from now on all covariant derivatives and norms are with respect to $h$.  We will use a standard Moser iteration argument to show that the $L^2$-bound (\ref{Bgap}) can be converted to a pointwise bound. 

Since we will be using Moser iteration, we will need control of the Sobolev constant.  The minimality of $Y$ implies the Gauss curvature $K \leq -1$.  Also, by Bernstein's inequality (\ref{BernI}) the assumption $\mathcal{A}(Y)>-2\pi\chi(Y)-\epsilon_0$ implies
\[
-2\pi\chi(Y)-\epsilon_0<-2\pi.
\]
If \(\epsilon_0<2\pi\), then \(\chi(Y)>0\). Since \(\partial Y=\gamma\) is connected,
it follows that \(Y\) is topologically a disk.  By a result of Chavel-Feldman \cite{chavel1980isoperimetric}, we then have a (sharp) bound for the isoperimetric constant of $Y$.  Using this bound we obtain, by standard arguments via the coarea formula, a Sobolev inequality of the form
\begin{align} \label{CS} 
\left( \int \phi^2 \, dA_h \right)^{1/2} \leq \frac{1}{\sqrt{4\pi}}\int |\nabla \phi| \, dA_h,
\end{align}
which holds for all $\phi \in C^1_0(Y)$.  

Since the Simons identity gives a nonlinear PDE for $B$, we need a 'supercritical' $L^p$-estimate for $B$.

\begin{lemma}\label{L4Lemma}
Assume that the Sobolev inequality (\ref{CS}) holds for all \(\varphi\in C^1_0(Y)\). There are universal constants
\(\epsilon_1>0\) and \(C_1>0\) such that if
\[
\int |B|^2\,dA < \epsilon_1,
\]
then
\[
\int |B|^4\,dA \leq C_1,
\]
where $dA$ is the area form of $h$.  In fact, one may take \(\epsilon_1=\pi/12\) and \(C_1=\pi/9\).
\end{lemma}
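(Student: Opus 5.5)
The plan is to test the Simons inequality (\ref{sid1}) against $|B|^2$ itself, with no cutoff. The point is to use the Sobolev inequality (\ref{CS}) with $\phi = |B|^2$ so that the bad quartic term $|B|^4$ can be absorbed. Write $f = |B|$.

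\textbf{Step 1: the basic integral inequality.} Multiplying $\frac12\Delta f^2 \ge |\nabla B|^2 - 2f^2 - \frac32 f^4$ by $f^2$ and integrating by parts gives
\begin{align*}
\int |\nabla B|^2 f^2 + \int \tfrac12 \langle \nabla f^2, \nabla f^2\rangle \le 2\int f^4 + \tfrac32 \int f^6 .
\end{align*}
The left side is nonnegative. Kato's inequality $|\nabla f|\le|\nabla B|$ gives $\int |\nabla f^2|^2 \lesssim$ right-hand side.

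Since $\bar Y$ is $C^{4,\alpha}$ up to the boundary and $|B|^2 = r^2|\bar B|^2 = O(r^4)$ by \cite{FH}, the boundary terms vanish. To make this rigorous I would integrate over $Y_\epsilon$ and let $\epsilon\to0$, or insert a cutoff $\chi_\epsilon$ whose gradient error tends to zero. The same decay shows that $f^2$ is an admissible test function in (\ref{CS}) after the usual approximation.

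\textbf{Step 2: control of $\int f^6$.} This is where smallness of $\int f^2$ enters. By Cauchy--Schwarz, $\int f^6 = \int f^2\cdot f^4 \le (\int f^4)^{1/2}(\int f^8)^{1/2}$, which goes the wrong way. Instead I would apply (\ref{CS}) to $\phi = f^3$:
\begin{align*}
\Big(\int f^6\Big)^{1/2}\le \tfrac{1}{\sqrt{4\pi}}\int 3f^2|\nabla f| \le \tfrac{3}{\sqrt{4\pi}}\Big(\int f^2\Big)^{1/2}\Big(\int f^2|\nabla f|^2\Big)^{1/2}.
\end{align*}
Squaring gives $\int f^6 \le \frac{9}{4\pi}\,\epsilon_1 \int f^2|\nabla f|^2$.

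Step 1 actually yields $\int f^2|\nabla B|^2 + 2\int f^2|\nabla f|^2 \le 2\int f^4 + \frac32\int f^6$. Combining, the $f^6$ term is absorbed provided $\frac32\cdot\frac{9}{4\pi}\epsilon_1 < 3$, i.e. roughly $\epsilon_1 < 8\pi/9$. So $\epsilon_1 = \pi/12$ leaves plenty of room. The result is
\begin{align*}
\int f^2|\nabla f|^2 \le C\int f^4 \quad\text{and}\quad \int f^6 \le C'\epsilon_1\int f^4 .
\end{align*}

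\textbf{Step 3: close the $L^4$ bound.} Apply (\ref{CS}) to $\phi=f^2$:
\begin{align*}
\Big(\int f^4\Big)^{1/2}\le \tfrac{1}{\sqrt{4\pi}}\int 2f|\nabla f| \le \tfrac{1}{\sqrt\pi}\Big(\int f^2\Big)^{1/2}\Big(\int|\nabla f|^2\Big)^{1/2}.
\end{align*}
This requires $\int|\nabla f|^2$. I would obtain it by testing the Simons inequality against $1$, or more cleanly by integrating (\ref{sid1}) over $Y$ (the boundary term vanishes):
\begin{align*}
\int |\nabla f|^2 \le \int |\nabla B|^2 \le 2\int f^2 + \tfrac32 \int f^4 .
\end{align*}
Then
\begin{align*}
\int f^4 \le \tfrac{\epsilon_1}{\pi}\Big(2\epsilon_1 + \tfrac32\int f^4\Big),
\end{align*}
so with $\epsilon_1 = \pi/12$ the factor is $\frac{3\epsilon_1}{2\pi} = \frac18 < 1$. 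Absorbing gives
\begin{align*}
\int f^4 \le \frac{2\epsilon_1^2/\pi}{1-\frac18} = \frac{16\epsilon_1^2}{7\pi},
\end{align*}
which is well below $\pi/9$.

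This last route makes Step 2 unnecessary: the lemma follows directly from integrating (\ref{sid1}), applying Kato, and using Sobolev on $f^2$. I would present that streamlined version and keep the constants loose enough to land on $C_1=\pi/9$.

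\textbf{Main obstacle.} The expected difficulty is justifying the integrations by parts on the noncompact $Y$, in particular that $\int_Y \Delta |B|^2\,dA_h = 0$. I would handle this via the boundary expansion. In $\bar h$-terms the flux $\oint_{\{r=\epsilon\}} \partial_\nu |B|^2 \, ds_h$ is of size $\epsilon^{-1}\cdot \epsilon\cdot\partial_r(\epsilon^2|\bar B|^2)=O(\epsilon^3)\to0$. I would also need to confirm that $\int|\nabla B|^2$ is finite, which follows from the same expansion.
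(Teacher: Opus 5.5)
Your streamlined Step 3 is essentially the paper's proof. It uses Simons plus Kato to get $\int|\nabla f|^2\le 2\int f^2+\frac32\int f^4$, then applies (\ref{CS}) to $f^2$ with Cauchy--Schwarz, and absorbs the $f^4$ term once $\epsilon_1$ is small (legitimate because $\int f^4<\infty$ a priori from $|B|^2=O(r^4)$).

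The only difference is how the integrations by parts on the noncompact $Y$ are justified. The paper uses intrinsic cutoffs $\chi_R$ on $h$-geodesic balls and lets $R\to\infty$, which needs only completeness of $h$ and $f\in L^2\cap L^4$ but produces cross terms and the weaker constant $\pi/9$. You instead exhaust by $\{r>\epsilon\}$ and kill the boundary flux using the boundary regularity of $\bar Y$, which avoids those cross terms and gives the sharper bound $16\epsilon_1^2/(7\pi)=\pi/63$. Steps 1--2 can simply be dropped.
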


\begin{proof}  By conformal invariance it is easy to see that $|B| \in L^p(Y)$ for any $p > 1$.  Set
\[
f:=|B|,
\]
and assume 
\[ \int f^2 \, dA < \epsilon_1. \]  
Choose a standard cutoff function $\chi_R\in C^\infty_0(Y)$ such that
\[
0\le \chi_R\le 1,\qquad
\chi_R\equiv 1 \ \text{on } B_R,\qquad
\operatorname{supp}\chi_R\subset B_{2R},\qquad
|\nabla \chi_R|\le \frac{2}{R}.
\]
By the Simons inequality (\ref{sid1}) and Kato's inequality, 
\[
\frac12 \Delta(f^2)=f\Delta f+|\nabla f|^2
\ge |\nabla f|^2-2f^2-\frac32 f^4,
\]
hence
\begin{align} \label{sidf} 
f\Delta f \ge -2f^2-\frac32 f^4.
\end{align} 
Multiplying by $\chi_R^2$ and integrating by parts gives
\[
\int \chi_R^2 |\nabla f|^2\,dA
+2\int \chi_R f\langle \nabla f,\nabla \chi_R\rangle\,dA
\le
2\int f^2\chi_R^2\,dA +\frac32\int f^4\chi_R^2\,dA .
\]
Estimating the cross term by
\[
2\big|\chi_R f\langle \nabla f,\nabla \chi_R\rangle\big|
\le \frac12 \chi_R^2 |\nabla f|^2 + 2 f^2 |\nabla \chi_R|^2,
\]
we obtain
\begin{align} \label{L41}
\int \chi_R^2 |\nabla f|^2\,dA
\le
4\int f^2\chi_R^2\,dA
+4\int f^2|\nabla\chi_R|^2\,dA
+3\int f^4\chi_R^2\,dA.
\end{align} 

Applying the Sobolev inequality (\ref{CS}) to $\varphi=(f\chi_R)^2$, we get 
\[
\left(\int f^4\chi_R^4\,dA\right)^{1/2}
\le
\frac1{\sqrt{4\pi}}
\int \big|\nabla((f\chi_R)^2)\big|\,dA
=
\frac1{\sqrt{\pi}}
\int |f\chi_R|\,|\nabla(f\chi_R)|\,dA.
\]
By Cauchy--Schwarz,
\begin{align} \label{L42} 
\int f^4\chi_R^4\,dA
\le
\frac1{\pi}
\left(\int f^2\chi_R^2\,dA\right)
\left(\int |\nabla(f\chi_R)|^2\,dA\right).
\end{align} 
Since $\chi_R\le 1$ and $\int f^2\,dA <\epsilon_1$, we have
\[
\int f^2\chi_R^2\,dA \le \epsilon_1.
\]
Also,
\[
|\nabla(f\chi_R)|^2 \le 2\chi_R^2 |\nabla f|^2 + 2 f^2 |\nabla \chi_R|^2,
\]
hence by (\ref{L41}) 
\[
\int |\nabla(f\chi_R)|^2\,dA
\le
8\int f^2\chi_R^2\,dA
+10\int f^2|\nabla\chi_R|^2\,dA
+6\int f^4\chi_R^2\,dA.
\]
Substituting this into (\ref{L42}), we get
\[
\int f^4\chi_R^4\,dA
\le
\frac{\epsilon_1}{\pi}
\left(
8\int f^2\chi_R^2\,dA
+10\int f^2|\nabla\chi_R|^2\,dA
+6\int f^4\chi_R^2\,dA
\right).
\]

Letting $R\to\infty$, we have 
\[
\int f^4\,dA
\le
\frac{\epsilon_1}{\pi}
\left(
8\int f^2\,dA + 6\int f^4\,dA
\right),
\]
hence 
\[
\left(1-\frac{6\epsilon_1}{\pi}\right)\int f^4\,dA
\le \frac{8\epsilon_1^2}{\pi}.
\]
Consequently, if $\epsilon_1<\pi/6$ then 
\[
\int |B|^4\,dA = \int f^4\,dA
\le
\frac{8\epsilon_1^2}{\pi-6\epsilon_1}.
\]
\end{proof}

We are now ready to state the Moser iteration result.  Since we could not find an easily adaptable version to our setting, we also provide a proof.

\begin{lemma} \label{MoserLemma}   Assume that the Sobolev inequality (\ref{CS}) holds for all \(\varphi\in C^1_0(Y)\). Then there is a universal constant
\(\epsilon_2>0\) such that if
\[
\int |B|^2\,dA<2\epsilon_2,
\]
then
\[
|B|^2<2
\]
on \(Y\).
\end{lemma}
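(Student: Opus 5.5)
We will run a local Moser iteration for $f=|B|$ on geodesic balls of $(Y,h)$, with the Sobolev inequality (\ref{CS}) and the $L^4$ bound of Lemma \ref{L4Lemma} as inputs. Since we want a uniform bound at every point and $Y$ has infinite area, all estimates will be localized on balls of a fixed radius (say $1$) about an arbitrary point $q\in Y$. Take $\epsilon_2\le \epsilon_1/2$, so that Lemma \ref{L4Lemma} gives $\int f^4\,dA\le C_1$ with $C_1$ universal.

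\textbf{Step 1: absorbing the nonlinearity.} By (\ref{sidf}), $f$ is a weak subsolution of $\Delta f + Vf\ge 0$ with $V=2+\tfrac32 f^2$. We have $V\in L^2$ locally, since on $B_1(q)$ we get $\|V\|_{L^2(B_1(q))}\le 2|B_1(q)|^{1/2}+\tfrac32 C_1^{1/2}$. Here $|B_1(q)|$ is uniformly bounded because $K\le -1$ gives only lower volume growth. For an upper bound, it is cleaner to avoid area altogether and treat the constant $2$ directly: test against $\chi^2 f^{p-1}$. This is the main technical point. In dimension two, with an $L^1$-type Sobolev inequality, $V\in L^2$ is exactly at the threshold where Moser iteration needs smallness of $\|f^2\|_{L^2}$. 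Lemma \ref{L4Lemma}, however, only gives boundedness of $\int f^4$, not smallness.

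\textbf{Step 2: making the $L^4$ norm small.} I would go back to the final inequality of the proof of Lemma \ref{L4Lemma}, namely $\int f^4\le \frac{8\epsilon_1^2}{\pi-6\epsilon_1}$. This shows $\int f^4 = O(\epsilon_2^2)$ as $\epsilon_2\to0$, so $\|f^2\|_{L^2}$ is as small as we like. With this, testing (\ref{sidf}) against $\chi^2 f^{p-2}\cdot f$ yields
\[
\int |\nabla(\chi f^{p/2})|^2\le C p\int (|\nabla\chi|^2+\chi^2) f^p + Cp\int \chi^2 f^2 f^p .
\]
We bound the last term by Hölder: $\|f^2\|_{L^2}\,\|\chi f^{p/2}\|_{L^4}^2$. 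Applying (\ref{CS}) to $(\chi f^{p/2})^2$ as in Lemma \ref{L4Lemma} gives $\|\chi f^{p/2}\|_{L^4}^2\le \pi^{-1}\|\chi f^{p/2}\|_{L^2}\|\nabla(\chi f^{p/2})\|_{L^2}$. Young's inequality then absorbs the gradient, provided $Cp\|f^2\|_{L^2}$ is small.

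The factor $p$ is the obstacle to running all exponents at once. I would therefore avoid it as follows. First use the smallness only to reach a fixed exponent, $\int_{B_{1/2}} f^{8}\le C$ (or $f^6$). Then $V\in L^{4}$ locally, which is supercritical. From there a standard iteration $p_{k+1}=2p_k$ on shrinking radii $r_k=\tfrac14(1+2^{-k})$ gives
\[
\sup_{B_{1/4}(q)} f^2\le C\Big(\int_{B_{1}(q)} f^2\,dA\Big)^{\theta}
\]
for some universal $C,\theta>0$. At each stage the Sobolev inequality converts the $W^{1,2}$ bound of $\chi f^{p/2}$ into an $L^4$ bound, with constants growing geometrically in $k$ and hence summable in the product.

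\textbf{Step 3: conclusion.} Since the right-hand side above is at most $C(2\epsilon_2)^\theta$, choosing $\epsilon_2$ small makes $\sup f^2<2$ at every $q$, which gives $|B|^2<2$ on $Y$. All constants depend only on (\ref{CS}), Lemma \ref{L4Lemma}, and the coefficients in (\ref{sid1}), so they are universal and independent of codimension. The expected main obstacle is Step 2: handling the critical potential $\tfrac32 f^2$ via the smallness of $\int f^4$, while keeping track of the $p$-dependence until a supercritical exponent is reached.
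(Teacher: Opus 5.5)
Your argument is sound, but it takes a more roundabout route than the paper, and the obstacle you identify in Step 2 does not actually exist.

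\textbf{The paper's route.} The paper runs one global iteration. It turns (\ref{CS}) into the Gagliardo--Nirenberg form (\ref{S6}) and tests (\ref{sidf}) against $\eta^{2m}f^{2m-1}$ with cutoffs of radius $R$. It lets $R\to\infty$ at every stage, using $|B|\in L^p(Y)$ for all $p$, and iterates with $m_k=3^k$ to get $\|B\|_{L^\infty}\le A_\infty\|B\|_{L^2}$ on all of $Y$. The cubic term needs only the \emph{bound} $\int f^4\le C_1$, not smallness:
\begin{itemize}
\item H\"older gives $\|f\|_{L^4}^2\|\eta^mf^m\|_{L^4}^2$;
\item interpolation gives $\|\eta^mf^m\|_{L^4}^2\le\|\eta^mf^m\|_{L^6}^{3/2}\|\eta^mf^m\|_{L^2}^{1/2}$;
\item a weighted Young inequality absorbs the $L^6$ factor, leaving a lower-order coefficient of size $m^4\|f\|_{L^4}^8$.
\end{itemize}
That coefficient is polynomial in $m$, so it is harmless in the iteration.

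\textbf{The same fix works in your Step 2.} From $\|w\|_{L^4}^2\le\pi^{-1/2}\|w\|_{L^2}\|\nabla w\|_{L^2}$ (the constant is $\pi^{-1/2}$, not $\pi^{-1}$), weighted Young gives $Cp\|f^2\|_{L^2}\|w\|_{L^4}^2\le\frac12\|\nabla w\|_{L^2}^2+C'p^2\|f\|_{L^4}^4\|w\|_{L^2}^2$, whatever the size of $\|f\|_{L^4}$. In two dimensions the critical class for the potential is $L^1$, which here corresponds to $\int|B|^2$ itself. So $\frac32 f^2\in L^2$ is supercritical, not borderline. Extracting smallness of $\int f^4$ from the proof of Lemma \ref{L4Lemma}, and your two-stage scheme (first reach $L^8$, then iterate), are both correct but unnecessary.

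\textbf{What each approach buys.} Localizing to unit balls gives a genuinely local mean-value estimate $\sup_{B_{1/4}(q)}|B|^2\le C\int_{B_1(q)}|B|^2$. Its only global input is the $L^4$ bound, at the cost of bookkeeping on shrinking radii. The paper's global version is shorter and yields the global $L^\infty$--$L^2$ bound directly.

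Your sentence that $K\le -1$ bounds $|B_1(q)|$ from above does not follow, since that curvature condition only gives lower volume bounds; you rightly discard it.
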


\begin{proof}  For $\varphi \in C_0^\infty(Y)$, by the Sobolev inequality and Hölder's inequality,
\begin{align*}
\left(\int \varphi^6\, dA\right)^{1/2} &\le \frac{1}{\sqrt{4\pi}} \int |\nabla \varphi^3|\, dA \\
&= \frac{3}{\sqrt{4\pi}} \int \varphi^2 |\nabla \varphi|\, dA \\
&\le \frac{3}{\sqrt{4\pi}}
\left(\int |\nabla \varphi|^2\, dA\right)^{1/2}
\left(\int \varphi^4\, dA\right)^{1/2} \\
&\le \frac{3}{\sqrt{4\pi}}
\left(\int |\nabla \varphi|^2\, dA\right)^{1/2}
\left(\int \varphi^6\, dA\right)^{1/4}
\left(\int \varphi^2\, dA\right)^{1/4}.
\end{align*}
Therefore, 
\begin{align} \label{S6} \begin{split} 
\left(\int \varphi^6\, dA\right)^{1/3} &\le
\left(\frac{3}{\sqrt{4\pi}}\right)^{4/3}
\left(\int |\nabla \varphi|^2\, dA\right)^{2/3}
\left(\int \varphi^2\, dA\right)^{1/3} \\
&\le \int |\nabla \varphi|^2\, dA
+
\frac{3}{4\pi^2}\int \varphi^2\, dA,
\end{split}
\end{align}
which holds for any $\varphi \in W_0^{1,2}(Y)$.

As in the proof of Lemma \ref{L4Lemma}, let $f = |B|$. Fix $p \in Y$, and for $R \gg 1$ let $\eta$ be a $C^\infty$-cutoff with
$\eta \equiv 1$ on $B_R(p)$, $\eta \equiv 0$ on $Y \setminus B_{2R}(p)$, $0 \le \eta \le 1$, and $|\nabla \eta| \le 2R^{-1}$,
where $B_R(p)$ is the geodesic ball centered at $p$ with radius $R$.  By (\ref{sidf}),
\[
\Delta f \ge -2f - \frac{3}{2}f^3.
\]
For $m \geq 1$, we multiply both sides of the above by $\eta^{2m}f^{2m-1}$ and integrate by parts: 
\begin{align*}
& \int \eta^{2m} f^{2m-1} (-\Delta f)\, dA \le \int 2\eta^{2m} f^{2m}\, dA
+
\int \frac{3}{2} f^{2m+2}\eta^{2m}\, dA \\
& \hskip-.25in  \ \Longrightarrow \\ 
& \int (2m-1)\eta^{2m} f^{2m-2} |\nabla f|^2\, dA
+
2m \int \eta^{2m-1} f^{2m-1} \langle \nabla \eta,\nabla f\rangle\, dA \\
& \hskip.5in  \le \int 2\eta^{2m} f^{2m}\, dA + \int \frac{3}{2} f^{2m+2}\eta^{2m}\, dA,
\end{align*}
which implies
\[
\int \eta^{2m} f^{2m-2} |\nabla f|^2\, dA
\le
\int -\frac{2m}{2m-1}\eta^{2m-1} f^{2m-1}
\langle \nabla \eta,\nabla f\rangle\, dA
\]
\[
\qquad
+
\int \frac{2}{2m-1}\eta^{2m} f^{2m}\, dA
+
\int \frac{3}{2(2m-1)} f^{2m+2}\eta^{2m}\, dA.
\]
We can estimate the cross term by
\begin{align*}
\int -\frac{2m}{2m-1}\eta^{2m-1} f^{2m-1}
\langle \nabla \eta,\nabla f\rangle\, dA &\le \int \frac{1}{2}\eta^{2m} f^{2m-2} |\nabla f|^2\, dA \\
& \ \ \ \ \ \ + \int \frac{2m^2}{(2m-1)^2}\eta^{2m-2} f^{2m} |\nabla \eta|^2\, dA.
\end{align*}
Therefore,
\begin{align} \label{Mi2} \begin{split}
\int \eta^{2m} f^{2m-2} |\nabla f|^2\, dA
&\le \int \frac{4m^2}{(2m-1)^2}\eta^{2m-2} f^{2m} |\nabla \eta|^2\, dA \\
& \ \ \ \ + \int \frac{4}{2m-1}\eta^{2m} f^{2m}\, dA + \int \frac{3}{2m-1} f^{2m+2}\eta^{2m}\, dA.
\end{split}
\end{align} 

Now,
\[
\nabla(\eta^m f^m)
=
m\eta^{m-1}f^m \nabla \eta
+
m\eta^m f^{m-1}\nabla f,
\]
hence
\[
|\nabla(\eta^m f^m)|^2
=
m^2\eta^{2m-2}f^{2m}|\nabla \eta|^2
+
2m^2\eta^{2m-1}f^{2m-1}\langle \nabla \eta,\nabla f\rangle
+
m^2\eta^{2m}f^{2m-2}|\nabla f|^2
\]
\[
\le
2m^2\eta^{2m-2}f^{2m}|\nabla \eta|^2
+
2m^2\eta^{2m}f^{2m-2}|\nabla f|^2.
\]
Combining with (\ref{Mi2}), we find 
\begin{align} \label{Mi3} \begin{split} 
\int |\nabla(\eta^m f^m)|^2\, dA &\le \int \left(\frac{8m^4}{(2m-1)^2} +
2m^2
\right)
\eta^{2m-2}f^{2m}|\nabla \eta|^2\, dA  \\
& \ \ \ \ + \int \frac{8m^2}{2m-1}\eta^{2m}f^{2m}\, dA
+
\int \frac{6m^2}{2m-1}f^{2m+2}\eta^{2m}\, dA.
\end{split}
\end{align}
By the Sobolev inequality (\ref{S6}), 
\[
\left(\int (\eta^m f^m)^6\, dA\right)^{1/3}
\le
\int |\nabla(\eta^m f^m)|^2\, dA
+
\int \frac{3}{4\pi^2}\eta^{2m}f^{2m}\, dA,
\]
hence by (\ref{Mi3}) 
\begin{align} \label{Mi4} \begin{split} 
\left(\int (\eta^m f^m)^6\, dA\right)^{1/3} &\le
\int
\left(
\frac{8m^4}{(2m-1)^2}
+
2m^2
\right)
\eta^{2m-2}f^{2m}|\nabla \eta|^2\, dA \\
& \ \ \ \ + \int
\left(
\frac{8m^2}{2m-1}
+
\frac{3}{4\pi^2}
\right)
\eta^{2m}f^{2m}\, dA
+
\int \frac{6m^2}{2m-1}f^{2m+2}\eta^{2m}\, dA.
\end{split}
\end{align} 

Using Hölder's inequality we can estimate the last term in (\ref{Mi4}) by 
\[
\int \frac{6m^2}{2m-1}f^{2m+2}\eta^{2m}\, dA
\le
\frac{6m^2}{2m-1}
\left(\int f^4\, dA\right)^{1/2}
\left(\int f^{4m}\eta^{4m}\, dA\right)^{1/2}
\]
\[
\le
\frac{6m^2\|f\|_{L^4}^2}{2m-1}
\left(\int (\eta^m f^m)^6\, dA\right)^{1/4}
\left(\int \eta^{2m}f^{2m}\, dA\right)^{1/4}
\]
\[
\le
\frac{1}{2}
\left(\int (\eta^m f^m)^6\, dA\right)^{1/3}
+
\left(\frac{27}{32}\right)
\left[
\left(\frac{6m^2}{2m-1}\right)\|f\|_{L^4}^2
\right]^4
\int \eta^{2m}f^{2m}\, dA.
\]

Substituting this into (\ref{Mi4}), we find
\begin{align} \label{Mi4b} 
\left(\int (\eta^m f^m)^6\, dA\right)^{1/3}
\le
C_1(m)
\int \eta^{2m-2}f^{2m}|\nabla \eta|^2\, dA
+
C_2(m)
\int \eta^{2m}f^{2m}\, dA,
\end{align}
where
\[
C_1(m)
=
\frac{16m^4}{(2m-1)^2}
+
4m^2,
\]
\begin{align} \label{C2m} 
C_2(m)
=
\frac{8m^2}{2m-1}
+
\frac{3}{4\pi^2}
+
\left(\frac{27}{16}\right)
\left[
\left(\frac{6m^2}{2m-1}\right)\|f\|_{L^4}^2
\right]^4.
\end{align} 
Since $m \ge 1$, it follows that $2m-1 \ge m$.
Then using Lemma \ref{L4Lemma} (with $\epsilon_1 = \pi/12$) we can estimate $C_2$ by
\[
C_2(m)
\le
8m
+
27\pi^2 m^4
+
\frac{3}{4\pi^2}
\]
\[
\le
\overline{C}_2(1+m^4),
\]
where $\overline{C}_2$ is an absolute constant which does not depend on $m.$
Therefore, by (\ref{Mi4b}), 
\begin{align} \label{Mi5} 
\left(\int (\eta^m f^m)^6\, dA\right)^{1/3}
\le
C_1(m)
\int \eta^{2m-2}f^{2m}|\nabla \eta|^2\, dA
+
\overline{C}_2(1+m^4)
\int \eta^{2m}f^{2m}\, dA.
\end{align}

If we take $m=1$ in (\ref{Mi5}) then let $R \to \infty$, we obtain
\[
\|f\|_{L^6}
\le
\overline{C}_2^{1/2}2^{1/2}\|f\|_{L^2}.
\]
We can now iterate in the usual manner, each time applying (\ref{Mi5}) and letting $R \to \infty$.
If we take $m_k = 3^k$, then 
\[
\|f\|_{L^{2m_k}}
\le
A_k\|f\|_{L^2},
\]
where
\[
A_k
=
\overline{C}_2^{\frac12\left[\frac{1}{m_{k-1}}+\cdots+1\right]}
\prod_{j=1}^{k-1} (1+m_j^4)^{\frac{1}{2m_j}}.
\]
Letting $k \to \infty$, we conclude
\[
\|B\|_{L^\infty}
\le
A_{\infty}\,\|B\|_{L^2}
<
A_{\infty}\, \sqrt{2 \epsilon_2},
\]
where $A_{\infty} = \lim_{k \to \infty} A_k$. In particular, once $\epsilon_2>0$ is small enough, $|B|^2 <2$.
\end{proof}

\begin{proof}[Proof of Theorem \ref{Thm2}]
Let \(\epsilon_1\) be the constant in Lemma \ref{L4Lemma} and let \(\epsilon_2\) be the
constant in Lemma \ref{MoserLemma}. Define
\begin{align} \label{ez}
\epsilon_0
=
\frac12\min\left\{2\pi,\frac{\epsilon_1}{2},\epsilon_2\right\}.
\end{align}
Then the assumption
\[
\mathcal A(Y)>-2\pi\chi(Y)-\epsilon_0
\]
implies, by the discussion preceding Lemma \ref{L4Lemma}, that \(Y\) is a disk and hence
the Sobolev inequality \((4.3)\) holds. Also, by the renormalized area formula,
\[
\int |B|^2\,dA_h<2\epsilon_0.
\]
Since \(2\epsilon_0<2\epsilon_2\), Lemma \ref{MoserLemma} gives \(|B|^2<2\) on \(Y\). Since
\(Y\) is minimal, \(A=B\). Finally, in case $(i)$ Corollary \ref{AHCor}  gives \(U_3=0\), while in case $(ii)$
Proposition \ref{prop:nondegenerate-critical-implies-U3} gives \(U_3=0\). Therefore, Theorem \ref{Thm1} implies 
 \(\gamma=\partial Y\) is a circle and that \(Y\) is a totally geodesic
disk.
\end{proof}

\medskip 

\begin{remark} \label{epRemark}  A rough estimate of the constant $A_{\infty}$ shows that one can take $\epsilon_0 \approx 10^{-6}$.  In particular, $\epsilon_0$ does not depend on the codimension.  
\end{remark}

\medskip

\begin{proof}[Proof of Theorem \ref{Thm3}]   Suppose either (\ref{Asmall}) or (\ref{gap}) holds, where $\epsilon_0$ is given by (\ref{ez}).  Then (as we saw in the proof of Theorem \ref{Thm2} above) in both cases (\ref{Asmall}) is valid.  

Since the maximum of $|\bar{H}| = 2\sqrt{\eta}$ is attained at an interior point $p \in Y$, Proposition \ref{CP2Prop} implies that either $Y$ is totally geodesic, or  $p \in \mathcal{C}$ and (\ref{negdet}) holds.  However, as we saw in the proof of Theorem \ref{Thm2}, (\ref{Asmall}) contradicts (\ref{negdet}). 
 \end{proof}

\bigskip

%
%

%
%
%
%

\end{document}